 \newtheorem{thm}{Theorem}[section]
 \newtheorem{cor}[thm]{Corollary}
 \newtheorem{lem}[thm]{Lemma}
 \newtheorem{prop}[thm]{Proposition}
 \theoremstyle{definition}
 \newtheorem{defn}[thm]{Definition}
 \theoremstyle{remark}
 \newtheorem{rem}[thm]{Remark}
 \numberwithin{equation}{section}
\begin{document}

\begin{frontmatter}

\title{Fractional de la Vallée Poussin inequalities}

\author[label1]{Rui A. C. Ferreira}
\address[label1]{Grupo F\'isica-Matem\'atica, Faculdade de Ci\^encias, Universidade de Lisboa, Av. Prof. Gama Pinto, 2, 1649-003 Lisboa, Portugal}


\ead{raferreira@fc.ul.pt}

\begin{abstract}
In this work we derive some inequalities for fractional boundary value problems, that generalize the well-known de la Vallée Poussin inequality. With our results we also were able to improve the intervals where some Mittag--Leffler functions don't possess real zeros.
\end{abstract}

\begin{keyword}
de la Vallée Poussin inequality \sep Fractional derivatives \sep Lyapunov inequality.
 MSC codes here, in the form: \MSC 26D10  \sep 34A08. 
\end{keyword}

\end{frontmatter}


\section{Introduction}

When considering a second order linear boundary value problem with Dirichlet boundary conditions, the following result is known as the de la Vallée Poussin inequality (see e.g. \cite{Hartman}):

\begin{thm}\label{VP}
Suppose that $x\in C^2[a,b]$ is a nontrivial solution of the BVP
\begin{align}
    x''+g(t)x'&+f(t)x=0,\quad t\in(a,b)\nonumber\\
    x(a)=&0=x(b),\label{bc0}
\end{align}
where $f,g\in C[a,b]$. Then, the following inequality holds:
\begin{equation}\label{V-P in}
    1<M_1(b-a)+M_2\frac{
(b-a)^2}{2},
\end{equation}
where $M_1=\max_{t\in[a,b]}|g(t)|$ and $M_2=\max_{t\in[a,b]}|f(t)|$.
\end{thm}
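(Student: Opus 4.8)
The plan is to combine the location of the maximum modulus of a nontrivial solution with Taylor's theorem applied at that interior extremum. Write $P=\max_{t\in[a,b]}|x(t)|$ and $Q=\max_{t\in[a,b]}|x'(t)|$. Since $x$ is nontrivial with $x(a)=x(b)=0$, the value $P$ is positive and is attained at some interior point $c\in(a,b)$; because $c$ is an interior extremum of $x$, we have $x'(c)=0$. Rewriting the equation as $x''=-gx'-fx$ and taking absolute values gives the pointwise bound $|x''(t)|\le M_1 Q+M_2 P=:R$ for all $t\in[a,b]$. I would first record that $R>0$: if $R=0$ then $x''\equiv 0$, so $x$ is affine, and the two boundary conditions force $x\equiv 0$, contradicting nontriviality.

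Next I would bound $P$ and $Q$ separately. Applying Taylor's formula with integral remainder about $c$ and evaluating at $t=a$, the conditions $x(a)=0$ and $x'(c)=0$ yield $x(c)=\int_a^c(a-s)x''(s)\,ds$, whence
\[
P=|x(c)|\le R\int_a^c(s-a)\,ds=R\,\frac{(c-a)^2}{2}.
\]
Since $c\in(a,b)$ is strictly interior, $c-a<b-a$, and as $R>0$ this is strict: $P<R\,(b-a)^2/2$. To bound $Q$, let $d\in[a,b]$ be a point where $|x'|$ attains $Q$; integrating $x''$ from $c$ to $d$ and using $x'(c)=0$ gives $Q=|x'(d)|\le R\,|d-c|$. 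Again because $c$ is strictly interior, $|d-c|\le\max(c-a,\,b-c)<b-a$, so $Q<R\,(b-a)$.

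Finally I would substitute these two estimates into the definition $R=M_1 Q+M_2 P$. Since $x$ is nontrivial at least one of $M_1,M_2$ is positive (otherwise $R=0$), so the corresponding strict estimate propagates to give
\[
R=M_1 Q+M_2 P<M_1\,R\,(b-a)+M_2\,R\,\frac{(b-a)^2}{2}.
\]
Dividing through by $R>0$ then produces the desired inequality \eqref{V-P in}. The one delicate point, and the step I expect to require the most care, is upgrading the natural non-strict bounds to strict ones: this hinges entirely on the maximum of $|x|$ being attained strictly inside $(a,b)$, which is exactly what the homogeneous Dirichlet conditions guarantee.
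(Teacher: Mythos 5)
Your argument is correct, and it is a genuinely different route from the one the paper takes. The paper never proves Theorem \ref{VP} from first principles: it quotes it from Hartman--Wintner and later recovers it as a corollary of its own results (Corollary \ref{corHartman}, i.e.\ Theorem \ref{thm0} with $\beta=1$, and again from Theorem \ref{main1} with $\alpha=2$, $\beta=1$), both of which proceed by converting the boundary value problem into an integral equation with the kernel $(t-a)(b-s)/(b-a)$ and then maximizing the resulting integral expressions. You instead argue pointwise in the classical de la Vall\'ee Poussin style: Taylor's formula with integral remainder centered at the interior point $c$ where $|x|$ peaks (so that $x'(c)=0$) gives $P\le R\,(c-a)^2/2$, integrating $x''$ from $c$ gives $Q\le R\,|d-c|$, and the identity $R=M_1Q+M_2P$ closes the loop; the strict inequality comes from $a<c<b$ together with $R>0$, and your handling of these two points (including the degenerate case $M_1=M_2=0$) is exactly right. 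Your approach is more elementary --- no Green's function, no kernel maximization --- and isolates cleanly why the inequality is strict. What the paper's kernel approach buys in exchange is the sharper integral-form conclusion
\begin{equation*}
b-a<\max\left\{\int_a^{b}(s-a)|g(s)|\,ds,\int_a^{b}(b-s)|g(s)|\,ds\right\}+\int_a^{b}(s-a)(b-s)|f(s)|\,ds,
\end{equation*}
which implies \eqref{V-P in} with room to spare and, more importantly, is the form that survives the passage to fractional derivatives carried out in the rest of the paper; the Taylor-at-the-extremum device does not transfer to that setting because an interior maximum of $|x|$ no longer annihilates the fractional derivative $D_a^\beta x$.
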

Cohn \cite{Cohn}, Harris \cite{Harris}, Hartman and Wintner \cite{Hartman}, and most recently the author \cite{Ferreira3} obtained generalizations of Theorem \ref{VP} in these referenced works, respectively. A survey about the de La Vallée Poussin work on boundary value problems maybe found in \cite{Mawhin}. The research in order to find de la Vallée Poussin or Lyapunov type inequalities is an endless subject (see e.g.  \cite{Mitrinovic}), but until 2013, it was done exclusively for classical ordinary differential equations. However, in that year the author presented for the first time in the literature \cite{Ferreira0} an inequality for a fractional differential equation depending on a fractional derivative. His result generalized the classical Lyapunov inequality (see \cite[Theorem 2.1]{Ferreira0}). Since then, many other researchers dedicated their time to find Lyapunov-type inequalities for boundary value problems in which fractional derivatives are present (see \cite{Agarwal,Cabrera1,Cabrera2,Jleli} and the references therein). It is, nevertheless, worth mentioning that there are some open problems within the subject \cite{Ferreira2}.

In this work we consider the fractional differential equation (see Section \ref{sec2} for a brief introduction to fractional calculus)
\begin{equation}\label{eq6969}
    (D_a^\alpha x)+g(t)(D_a^\beta x)+f(t)x=0,\quad 1<\alpha\leq 2,\ 0<\beta\leq 1,
\end{equation}
together with the boundary conditions \eqref{bc0}, and make an attempt to derive inequalities of de la Vallée Poussin type for such a problem. To the best of our knowledge it is the first time such results appear in the literature for an equation of the type given in \eqref{eq6969}. We divide our main results into two sections: in the first section we consider the differential equation $x''+g(t)(D_a^\beta x)+f(t)x=0$, while in the second one, we consider the differential equation $(D_a^\alpha x)+g(t)(D_a^\beta x)+f(t)x=0$. The main reason to do it so is that, when considering the first equation we were able to obtain results that generalize the ones by Hartman and Wintner \cite{Hartman} (and consequently of the de la Vallée Poussin), while when considering the second equation we were \emph{only} able to generalize the results of de la Vallée Poussin. Nevertheless, it is worth mentioning it that we obtain as a particular case from \eqref{eq6969}--\eqref{bc0}, i.e. considering $g=0$ on $[a,b]$, the Lyapunov fractional inequality \cite[Theorem 2.1]{Ferreira0}. Finally, we revisit some results (and provide some new ones) related with the zeros of certain Mittag--Leffler functions.

It  is  the  first  time  that  these  type  of  inequalities  appear  in  the  literature  for differential equations with a middle term (cf. \eqref{eq6969}) and as such we believe that this work might be a cornerstone for future research within this interesting subject.

\section{Fractional Calculus}\label{sec2}

We introduce here to the reader the basics about fractional integrals and derivatives, namely, what will be used throughout this work. A thorough introduction to the subject may be found in \cite{Kilbas}.

\begin{defn}
Let $\alpha\geq 0$ and $f$ be a real function defined on $[a,b]$.
The Riemann--Liouville fractional integral of order $\alpha$ is
defined by $(I^0_af)(x)=f(x)$ and
\begin{equation*}
(I_a^\alpha
f)(t)=\frac{1}{\Gamma(\alpha)}\int_a^t(t-s)^{\alpha-1}f(s)ds,\quad
\alpha>0,\quad t\in[a,b],
\end{equation*}
provided the integral exists.
\end{defn}

\begin{defn}
The Riemann--Liouville fractional derivative of order $n-1<\alpha\leq n$, $n\in\mathbb{N}$ of a function $f$ is defined by $(D_a^\alpha
f)(t)=(D^n I_a^{n-\alpha}f)(t)$, provided the right hand side of the equality exists. 
\end{defn}

The following result may be found in \cite[Property 2.2]{Kilbas}.

\begin{prop}\label{Prop}
Suppose that $f\in C[a,b]$ and let $q\geq p>0$. Then,
$$(D_a^p I_a^q f)(t)=(I_a^{q-p}f)(t),\quad t\in[a,b].$$
\end{prop}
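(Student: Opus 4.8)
The plan is to reduce the statement to the \emph{semigroup property} of the Riemann--Liouville integral, namely $(I_a^\mu I_a^\nu f)(t)=(I_a^{\mu+\nu}f)(t)$ for $\mu,\nu>0$, together with the elementary fact that $(D^n I_a^n g)(t)=g(t)$ for every $g\in C[a,b]$ and $n\in\mathbb{N}$. Both ingredients are standard, and once they are in place the proposition follows quickly, because the definition of $D_a^p$ is precisely an integer-order derivative composed with a fractional integral.

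First I would establish the semigroup property. Writing out the two integrals,
\begin{equation*}
(I_a^\mu I_a^\nu f)(t)=\frac{1}{\Gamma(\mu)\Gamma(\nu)}\int_a^t(t-s)^{\mu-1}\left(\int_a^s(s-\tau)^{\nu-1}f(\tau)\,d\tau\right)ds,
\end{equation*}
I would interchange the order of integration by Fubini's theorem, which is justified because $f$ is continuous, hence bounded, on the compact interval $[a,b]$, so the integrand is absolutely integrable over the triangle $a\le\tau\le s\le t$. After swapping, the inner integral becomes $\int_\tau^t(t-s)^{\mu-1}(s-\tau)^{\nu-1}\,ds$, and the substitution $s=\tau+(t-\tau)u$ turns it into $(t-\tau)^{\mu+\nu-1}B(\mu,\nu)$, where $B(\mu,\nu)=\Gamma(\mu)\Gamma(\nu)/\Gamma(\mu+\nu)$ is the Beta function. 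The Gamma factors cancel, leaving exactly $(I_a^{\mu+\nu}f)(t)$.

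With these tools I would conclude as follows. Fix $n\in\mathbb{N}$ with $n-1<p\le n$, so that by definition $(D_a^p I_a^q f)(t)=(D^n I_a^{n-p}I_a^q f)(t)$. Applying the semigroup property (or, in the case $p=n$, simply using $I_a^0=\mathrm{Id}$) collapses the two integrals into $I_a^{\,n-p+q}f=I_a^{\,n+(q-p)}f$. If $q>p$ I would split $I_a^{\,n+(q-p)}f=I_a^n I_a^{\,q-p}f$ once more by the semigroup property and then invoke $D^nI_a^n g=g$ with the continuous function $g=I_a^{\,q-p}f$; if $q=p$ the exponent is exactly $n$ and $D^nI_a^n f=f=I_a^0 f$. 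In every case the outcome is $(I_a^{q-p}f)(t)$, as claimed.

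The main obstacle is the rigorous justification of the semigroup property: one must verify the hypotheses of Fubini's theorem and carry out the Beta-integral evaluation, while carefully tracking the boundary cases in which an order of integration degenerates to $0$ (that is, $p$ an integer, or $q=p$), where $I_a^0$ is the identity and the semigroup identity must be read accordingly. By contrast, the identity $D^nI_a^n g=g$ is just the iterated fundamental theorem of calculus and requires only the continuity of $g$.
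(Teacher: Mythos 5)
Your argument is correct and complete: the reduction of $D_a^pI_a^q$ to $D^nI_a^{n-p}I_a^q$, the semigroup property established via Fubini and the Beta integral, and the iterated fundamental theorem of calculus $D^nI_a^ng=g$ together give exactly the stated identity, with the degenerate cases $p=n$ and $q=p$ handled properly through the convention $I_a^0=\mathrm{Id}$. The paper itself offers no proof --- it simply quotes this as Property 2.2 of Kilbas, Srivastava and Trujillo --- and your derivation is the standard argument found there, so there is nothing to compare beyond noting that your version is a self-contained justification of the cited fact.
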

A version of the mean value theorem is contained in the following
\begin{thm}\cite[Theorem 3.1]{Trujillo}\label{thm23}
Let $0<\beta\leq 1$. Suppose that $f\in C[a,t]$ is such that $(D_a^\beta f)\in C[a,t]$. Let $f(a)=0$. Then, there exists $\tau\in(a,t)$ such that 
\begin{equation}\label{ui}
f(t)=\frac{(t-a)^\beta}{\Gamma(\beta+1)}( D_a^\beta f)(\tau).
\end{equation}
\end{thm}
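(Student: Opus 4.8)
The plan is to recover $f$ from its fractional derivative by a composition identity and then apply a mean value theorem for integrals. First I would write the Riemann--Liouville derivative as $(D_a^\beta f)(t)=\frac{d}{dt}(I_a^{1-\beta}f)(t)$ and establish the fundamental left-inverse relation for the Riemann--Liouville operators (see \cite{Kilbas})
\begin{equation*}
(I_a^\beta D_a^\beta f)(t)=f(t)-\frac{(t-a)^{\beta-1}}{\Gamma(\beta)}\lim_{s\to a^+}(I_a^{1-\beta}f)(s),
\end{equation*}
where the continuity of $f$ and of $(D_a^\beta f)$ guarantees that both sides are well defined and that the composition may legitimately be carried out.

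The key step is to show that the boundary term vanishes under the present hypotheses. When $\beta<1$, the boundedness of the continuous function $f$ on $[a,t]$ yields the estimate $|(I_a^{1-\beta}f)(s)|\le \frac{\|f\|_\infty}{\Gamma(2-\beta)}(s-a)^{1-\beta}$, so the limit as $s\to a^+$ is zero; when $\beta=1$ the operator $I_a^{1-\beta}$ reduces to the identity and the limit equals $f(a)=0$ by assumption. In either case the identity collapses to
\begin{equation*}
f(t)=(I_a^\beta D_a^\beta f)(t)=\frac{1}{\Gamma(\beta)}\int_a^t(t-s)^{\beta-1}(D_a^\beta f)(s)\,ds.
\end{equation*}

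Finally I would invoke the first mean value theorem for integrals. Since $(D_a^\beta f)$ is continuous on $[a,t]$ and the weight $(t-s)^{\beta-1}$ is nonnegative, integrable, and strictly positive on $(a,t)$, there exists $\tau\in(a,t)$ with
\begin{equation*}
\frac{1}{\Gamma(\beta)}\int_a^t(t-s)^{\beta-1}(D_a^\beta f)(s)\,ds=\frac{(D_a^\beta f)(\tau)}{\Gamma(\beta)}\int_a^t(t-s)^{\beta-1}\,ds.
\end{equation*}
Evaluating $\int_a^t(t-s)^{\beta-1}\,ds=(t-a)^\beta/\beta$ and using $\Gamma(\beta+1)=\beta\Gamma(\beta)$ then gives exactly \eqref{ui}.

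I expect the main obstacle to be the rigorous justification of the composition identity together with the vanishing of the boundary term, rather than the concluding mean value argument. In particular, the passage to the limit $s\to a^+$ must be handled separately in the cases $\beta<1$ and $\beta=1$, and a small additional remark is needed to place $\tau$ strictly inside $(a,t)$: the weighted average on the right lies between the minimum and maximum of $(D_a^\beta f)$, so the intermediate value theorem supplies an interior point, and any interior point works if $(D_a^\beta f)$ happens to be constant.
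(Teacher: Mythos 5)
The paper gives no proof of this statement---it is imported verbatim from Trujillo, Rivero and Bonilla \cite[Theorem~3.1]{Trujillo}---so there is nothing internal to compare against; your argument is correct and is essentially the standard derivation used in that source: the composition identity $I_a^\beta D_a^\beta f=f-\frac{(t-a)^{\beta-1}}{\Gamma(\beta)}(I_a^{1-\beta}f)(a^+)$ with the boundary term killed by the continuity of $f$ (for $\beta<1$) or by $f(a)=0$ (for $\beta=1$), followed by the weighted integral mean value theorem. Your closing remarks on placing $\tau$ strictly in $(a,t)$ and on the constant case are the right points to flag, and the computation $\int_a^t(t-s)^{\beta-1}\,ds=(t-a)^\beta/\beta$ together with $\Gamma(\beta+1)=\beta\Gamma(\beta)$ lands exactly on \eqref{ui}.
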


\section{Main results}

\subsection{The equation $x''+g(t)(D_a^\beta x)+f(t)x=0$}\label{sect1}

In this section we shall consider the following boundary value problem:
\begin{align}
    x''+g(t)(D_a^\beta x)&+f(t)x=0,\quad t\in(a,b),\ \beta\in(0,1],\label{eq0}\\
    x(a)=&0=x(b),\label{eq1}
\end{align}
where $f,g\in C[a,b]$. It follows the main result of this section:
\begin{thm}\label{thm0}
Suppose that $x\in C^2[a,b]$ is a solution of \eqref{eq0}--\eqref{eq1} such that $x(t)\neq 0$ for $t\in(a,b)$. Then, the following inequality holds:
\begin{multline}\label{in0}
    b-a<\max\left\{\int_a^{b} \frac{(s-a)^{2-\beta}}{\Gamma(2-\beta)}|g(s)|ds,\int_a^{b} \frac{(s-a)^{1-\beta}}{\Gamma(2-\beta)}(b-s)|g(s)|ds\right\}\\
    +\int_a^{b}(s-a)(b-s)|f(s)|ds.
\end{multline}
\end{thm}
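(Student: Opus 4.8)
The plan is to exploit the boundary conditions to write the solution $x$ as an integral against a Green's function, then estimate. First I would note that, since $x''=-g(t)(D_a^\beta x)-f(t)x$ on $(a,b)$ with $x(a)=x(b)=0$, I can recover $x$ from its second derivative via the standard Green's function for the Dirichlet problem $u''=h$, $u(a)=u(b)=0$. Explicitly,
\begin{equation*}
x(t)=-\int_a^b G(t,s)\bigl[g(s)(D_a^\beta x)(s)+f(s)x(s)\bigr]\,ds,
\end{equation*}
where $G(t,s)$ is the Green's function for $u''=0$, which on $[a,b]$ is given by $G(t,s)=\tfrac{(t-a)(b-s)}{b-a}$ for $a\le t\le s\le b$ and $G(t,s)=\tfrac{(s-a)(b-t)}{b-a}$ for $a\le s\le t\le b$. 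Since $x$ is continuous on the compact $[a,b]$ and vanishes at the endpoints but is nonzero in the interior, it attains a maximum of $|x|$ at some interior point $c\in(a,b)$; let $M=|x(c)|=\max_{[a,b]}|x|>0$.

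The second ingredient is to control the middle term $(D_a^\beta x)(s)$ in terms of $M$. This is exactly where the fractional mean value theorem (Theorem \ref{thm23}) enters: since $x(a)=0$, for each $s\in(a,b]$ there is $\tau\in(a,s)$ with $x(s)=\frac{(s-a)^\beta}{\Gamma(\beta+1)}(D_a^\beta x)(\tau)$, which rearranges to an expression for $(D_a^\beta x)$ evaluated at a point in terms of $x$. The point is to bound $|(D_a^\beta x)(s)|$ by a constant multiple of $M$ with an explicit power of $(s-a)$; I expect the relevant factor to be $(s-a)^{1-\beta}$ or $(s-a)^{-\beta}$ up to the Gamma-function constants, which is what ultimately produces the $\Gamma(2-\beta)$ and the weights $(s-a)^{2-\beta}$ and $(s-a)^{1-\beta}(b-s)$ appearing in the two branches of the maximum in \eqref{in0}.

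Evaluating the Green's function representation at $t=c$ and taking absolute values then gives
\begin{equation*}
M=|x(c)|\le \int_a^b G(c,s)|g(s)|\,|(D_a^\beta x)(s)|\,ds+\int_a^b G(c,s)|f(s)|\,|x(s)|\,ds.
\end{equation*}
In the second integral I would bound $|x(s)|\le M$ and use $G(c,s)\le \frac{(s-a)(b-s)}{b-a}$ (the standard pointwise bound obtained by maximizing the linear-in-$c$ expression), producing the term $\frac{1}{b-a}\int_a^b (s-a)(b-s)|f(s)|\,ds$ after dividing through by $M$. For the first integral I would insert the bound on $|(D_a^\beta x)(s)|$ from the previous paragraph, again majorize $G$, and collect the $(s-a)$-power weights; the two cases in the maximum presumably arise from whether the dominant contribution is estimated by the $(s-a)^{2-\beta}$ weight or by the $(s-a)^{1-\beta}(b-s)$ weight of the Green's function. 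Dividing the whole inequality by $M>0$ and multiplying by $(b-a)$ should yield \eqref{in0}.

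The main obstacle I anticipate is the sharp bookkeeping of the fractional middle term: the fractional mean value theorem gives $(D_a^\beta x)$ only at an unspecified intermediate point $\tau$, not at $s$ itself, so I must be careful to turn that into a genuine pointwise bound on $|(D_a^\beta x)(s)|$ for the $s$ appearing in the integrand, matching the exact weights. A plausible route is to apply Theorem \ref{thm23} to the function $x$ restricted to $[a,t]$ and combine it with the Green's function identity applied to the \emph{first derivative} or to an integrated form, so that the power $2-\beta$ emerges naturally from integrating $(s-a)^{1-\beta}$. Getting the constants and the split into the two branches of the $\max$ exactly right, rather than with a loss, is the delicate part; everything else is a routine majorization of the explicit Green's function.
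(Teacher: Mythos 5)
Your overall skeleton (a Green's-function representation plus division by a normalizing maximum) matches the paper's, but the normalization you chose creates a gap that your own last paragraph correctly identifies and does not close: you cannot bound $|(D_a^\beta x)(s)|$ pointwise by a multiple of $M=\max_{[a,b]}|x|$. The fractional mean value theorem (Theorem \ref{thm23}) runs in the opposite direction: it writes $x(s)=\frac{(s-a)^\beta}{\Gamma(\beta+1)}(D_a^\beta x)(\tau)$ for some unknown $\tau\in(a,s)$, so it lets you dominate $|x|$ by $\max|D_a^\beta x|$ (which is exactly how the paper uses it in its \emph{second} main theorem), not $|D_a^\beta x|$ by $\max|x|$. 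No bound of the latter kind can exist, since $D_a^\beta$ is an unbounded operator on $C[a,b]$: functions of small sup norm can have arbitrarily large fractional derivative. This is precisely why the paper normalizes by $\nu=\max_{[a,b]}|x'(t)|$ instead. It differentiates the Green's-function identity to obtain $(b-a)x'(t)=-\int_a^t(s-a)G(s)\,ds+\int_t^b(b-s)G(s)\,ds$ with $G=g\,(D_a^\beta x)+f x$, then uses the ordinary mean value theorem to get $|x(t)|\le\nu\min(t-a,b-t)$, and the representation $(D_a^\beta x)(t)=\frac{1}{\Gamma(1-\beta)}\int_a^t(t-s)^{-\beta}x'(s)\,ds$ (valid because $x(a)=0$) to get $|(D_a^\beta x)(t)|\le\frac{\nu}{\Gamma(2-\beta)}(t-a)^{1-\beta}$; evaluating at the point where $|x'|=\nu$ and cancelling $\nu$ gives the inequality. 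Your proposal as written cannot be completed without switching to this (or an equivalent) normalization.

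A second, smaller issue: even granting some bound on $D_a^\beta x$, majorizing $G(c,s)$ by $\frac{(s-a)(b-s)}{b-a}$ attaches a factor $(b-s)$ and a factor $\frac{1}{b-a}$ to \emph{both} branches, and so cannot reproduce the asymmetric pair of weights $\frac{(s-a)^{2-\beta}}{\Gamma(2-\beta)}$ and $\frac{(s-a)^{1-\beta}}{\Gamma(2-\beta)}(b-s)$ appearing in the maximum in \eqref{in0}. Those weights come from the two different kernels $(s-a)$ on $[a,t]$ and $(b-s)$ on $[t,b]$ in the formula for $x'$, combined with an explicit monotonicity argument (the function $S(t)$ in the paper's proof) showing that the resulting $t$-dependent sum of the two $g$-integrals attains its maximum at $t=a$ or $t=b$. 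Your sketch has no mechanism for producing this split, and the strictness of the final inequality (which the paper derives from the strictness of $|x(t)|<\nu\min(t-a,b-t)$ at some interior point) is also left unaddressed.
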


\begin{proof}
We start by writing the BVP \eqref{eq0}--\eqref{eq1} in an equivalent integral form. Indeed, we know that $x\in C^2[a,b]$ is a solution of \eqref{eq0} if and only if it is a solution of 
$$x(t)=c_1+c_2(t-a)-\int_a^t(t-s)[g(s)(D_a^\beta x)(s)+f(s)x(s)]ds,$$
with $c_1,c_2\in\mathbb{R}$.

Now, since $x(a)=0$, then $c_1=0$. Also, since $x(b)=0$, then 
$$c_2=\frac{1}{b-a}\int_a^b(b-s)[g(s)(D_a^\beta x)(s)+f(s)x(s)] ds.$$
Therefore,
\begin{multline*}
    x(t)=\int_a^t\left[\frac{t-a}{b-a}(b-s)-(t-s)\right][g(s)(D_a^\beta x)(s)+f(s)x(s)]ds\\
    +\int_t^b\frac{t-a}{b-a}(b-s)[g(s)(D_a^\beta x)(s)+f(s)x(s)]ds,
\end{multline*}
which after some simplifications finally yields
\begin{multline*}
    (b-a)x(t)=\int_a^t(b-t)(s-a)[g(s)(D_a^\beta x)(s)+f(s)x(s)]ds\\
    +\int_t^b(t-a)(b-s)[g(s)(D_a^\beta x)(s)+f(s)x(s)] ds.
\end{multline*}
Differentiating both sides of the previous equality gives
\begin{equation}
    (b-a)x'(t)=-\int_a^t(s-a)[g(s)(D_a^\beta x)(s)+f(s)x(s)]ds+\int_t^b(b-s)[g(s)(D_a^\beta x)(s)+f(s)x(s)] ds.
\end{equation}
Let $\nu=\max_{t\in[a,b]}|x'(t)|>0$. Then, by the mean value theorem and the fact that $x(a)=0=x(b)$, we know that
$$|x(t)|\leq\nu(t-a),$$
and
$$|x(t)|\leq\nu(b-t),$$
for $t\in[a,b]$.
Therefore, 
\begin{equation}\label{in11}
    |x(t)|\leq\nu\phi(t),
\end{equation}
where $\phi(t)=\min(t-a,b-t)$, and it is clear that the $\leq$ in \eqref{in11}  is  a  $<$  for  some $t\in(a,b)$. Moreover, in view of $x(a)=0$, we have that\footnote{Note that if $\beta=1$ we immediately see that $|x'(t)|\leq \nu$ for all $t\in[a,b]$.}
$$|(D_a^\beta x)(t)|=\left|\frac{1}{\Gamma(1-\beta)}\int_a^t(t-s)^{-\beta}x'(s)ds\right|\leq\frac{\nu}{\Gamma(2-\beta)}(t-a)^{1-\beta},\quad 0<\beta<1,$$
where again the inequality is strict for some $t\in(a,b)$.
Therefore,
\begin{multline*}
     (b-a)|x'(t)|<\nu\int_a^t(s-a)\left[|g(s)|\frac{(s-a)^{1-\beta}}{\Gamma(2-\beta)}+|f(s)|\phi(s)\right] ds\\
     +\nu\int_t^b(b-s)\left[|g(s)|\frac{(s-a)^{1-\beta}}{\Gamma(2-\beta)}+|f(s)|\phi(s)\right]ds.
\end{multline*}
Note that the definition of $\phi$ shows that $(s-a)\phi(s)$ and $(b-s)\phi(s)$ are majorized by $(s-a)(b-s)$ on $[a,b]$, hence
\begin{multline}\label{in12}
     (b-a)|x'(t)|<\nu\left(\int_a^t\frac{(s-a)^{2-\beta}}{\Gamma(2-\beta)}|g(s)|ds+\int_t^b\frac{(s-a)^{1-\beta}}{\Gamma(2-\beta)}(b-s)|g(s)|ds\right)\\
     +\nu\int_a^b(s-a)(b-s)|f(s)|ds.
\end{multline}
Now, we define $S(t)=\int_a^t\frac{(s-a)^{2-\beta}}{\Gamma(2-\beta)}|g(s)|ds+\int_t^b\frac{(s-a)^{1-\beta}}{\Gamma(2-\beta)}(b-s)|g(s)|ds$ for $t\in[a,b]$. Then,
$$S'(t)=\frac{(t-a)^{2-\beta}}{\Gamma(2-\beta)}|g(t)|-\frac{(t-a)^{1-\beta}}{\Gamma(2-\beta)}(b-t)|g(t)|=(2t-(a+b))\frac{(t-a)^{1-\beta}}{\Gamma(2-\beta)}|g(t)|,$$
which means that $\max_{t\in[a,b]}S(t)$ is obtained either at $t=a$ or at $t=b$. It follows from \eqref{in12} that
\begin{multline*}
    b-a<\max\left\{\int_a^{b} \frac{(s-a)^{2-\beta}}{\Gamma(2-\beta)}|g(s)|ds,\int_a^{b} \frac{(s-a)^{1-\beta}}{\Gamma(2-\beta)}(b-s)|g(s)|ds\right\}\\
    +\int_a^{b}(s-a)(b-s)|f(s)|ds,
\end{multline*}
which concludes the proof.
\end{proof}
If we let $\beta=1$ in the previous theorem, then we immediately get Hartman and Wintner's result \cite{Hartman}:
\begin{cor}\label{corHartman}
Suppose that $x\in C^2[a,b]$ is a solution of 
\begin{align*}
    x''+g(t)x'&+f(t)x=0,\quad t\in(a,b),\\
    x(a)=&0=x(b),
\end{align*}
such that $x(t)\neq 0$ for $t\in(a,b)$. Then, the following inequality holds:
\begin{equation*}
    b-a<\max\left\{\int_a^{b} (s-a)|g(s)|ds,\int_a^{b} (b-s)|g(s)|ds\right\}
    +\int_a^{b}(s-a)(b-s)|f(s)|ds.
\end{equation*}
\end{cor}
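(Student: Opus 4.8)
The plan is to obtain the corollary as the direct specialization of Theorem \ref{thm0} to $\beta=1$, so that the entire argument reduces to checking that each ingredient of the theorem collapses correctly. First I would confirm that the differential equation in the corollary is exactly \eqref{eq0} with $\beta=1$: since the Riemann--Liouville derivative of order $1$ is $(D_a^1 x)(t)=(D^1 I_a^0 x)(t)=x'(t)$, the middle term $g(t)(D_a^\beta x)$ becomes $g(t)x'$. The remaining hypotheses ($x\in C^2[a,b]$, $f,g\in C[a,b]$, $x(a)=0=x(b)$, and $x(t)\neq 0$ on $(a,b)$) are literally those of Theorem \ref{thm0}, so its conclusion \eqref{in0} applies verbatim.

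Next I would evaluate the constants appearing in \eqref{in0} at $\beta=1$. Here $\Gamma(2-\beta)=\Gamma(1)=1$, $(s-a)^{2-\beta}=(s-a)$, and $(s-a)^{1-\beta}=(s-a)^0=1$. Substituting these into the two integrals inside the maximum turns them into $\int_a^b (s-a)|g(s)|\,ds$ and $\int_a^b (b-s)|g(s)|\,ds$ respectively, while the final term $\int_a^b (s-a)(b-s)|f(s)|\,ds$ is unchanged. This is precisely the inequality claimed in the corollary, which recovers the result of Hartman and Wintner.

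The only point that warrants a moment of care is that the key derivative estimate used in the proof of Theorem \ref{thm0}, namely $|(D_a^\beta x)(t)|\leq \frac{\nu}{\Gamma(2-\beta)}(t-a)^{1-\beta}$, was derived under the restriction $0<\beta<1$, so one cannot simply pass to the limit $\beta\to 1$ inside that integral. Instead I would invoke the remark in the footnote of that proof, where $|x'(t)|\leq\nu$ holds directly for all $t\in[a,b]$ by the definition $\nu=\max_{t\in[a,b]}|x'(t)|$. Since this bound coincides with the formal value $\frac{\nu}{\Gamma(1)}(t-a)^0=\nu$ of the general estimate, every subsequent step of the proof of Theorem \ref{thm0} goes through unchanged at $\beta=1$. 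I therefore expect no genuine obstacle: the verification is routine, with the substitution of the Gamma and power values being the only computation.
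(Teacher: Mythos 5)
Your proposal is correct and matches the paper's approach exactly: the corollary is stated there as the immediate specialization of Theorem \ref{thm0} to $\beta=1$, with the substitutions $\Gamma(2-\beta)=1$, $(s-a)^{2-\beta}=s-a$, $(s-a)^{1-\beta}=1$ doing all the work. Your extra care about the $\beta=1$ case of the derivative bound (handled by the footnote's direct estimate $|x'(t)|\leq\nu$) is a sensible check that the paper leaves implicit.
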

\begin{rem}\label{rem90}
We note that if we assume in Theorem \ref{thm0} $x$ to be only nontrivial, then we may derive the inequality \eqref{in0} but with non-strict sign.
\end{rem}

We will end this section showing that, for certain values of the parameter $\beta$, we can improve a result obtained in \cite{Ferreira1}. For the sake of completeness we recall it now:
\begin{thm}\label{thm69}
Let $1<\alpha\leq 2$. Then, the Mittag--Leffler function 
$$E_{\alpha,2}(x)=\sum_{k=0}^\infty\frac{x^k}{\Gamma(k\alpha+2)},\quad x\in\mathbb{C},$$
has no real zeros for $$x\in\left[-\Gamma(\alpha)\frac{\alpha^\alpha}{(\alpha-1)^{\alpha-1}},0\right).$$
\end{thm}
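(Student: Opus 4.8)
The plan is to prove the contrapositive via the \emph{sharp} fractional Lyapunov inequality rather than through Theorem \ref{thm0}. I will realize each negative real zero of $E_{\alpha,2}$ as the value that makes a constant-coefficient fractional Dirichlet problem admit a nontrivial solution, and then quote the Lyapunov bound of \cite{Ferreira1} to force that value to lie outside the stated interval.

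Suppose $E_{\alpha,2}(z_0)=0$ for some real $z_0<0$ and put $\lambda=-z_0>0$. I would consider the Caputo problem $({}^{C}D_0^\alpha u)(t)+\lambda u(t)=0$ on $(0,1)$ with $u(0)=u(1)=0$. Differentiating the power series term by term and using that ${}^{C}D_0^\alpha t=0$ while ${}^{C}D_0^\alpha t^{\alpha k+1}=\frac{\Gamma(\alpha k+2)}{\Gamma(\alpha(k-1)+2)}t^{\alpha(k-1)+1}$ for $k\ge 1$, one checks that
\[
u(t)=t\,E_{\alpha,2}(-\lambda t^\alpha)=\sum_{k=0}^\infty\frac{(-\lambda)^k t^{\alpha k+1}}{\Gamma(\alpha k+2)}
\]
satisfies ${}^{C}D_0^\alpha u=-\lambda u$, $u(0)=0$, and $u(1)=E_{\alpha,2}(-\lambda)=E_{\alpha,2}(z_0)=0$. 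Since $u(t)\sim t$ as $t\to 0^+$, we have $u\not\equiv 0$, so $u$ is a nontrivial solution of the Dirichlet BVP with $q\equiv\lambda$.

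I would then invoke the sharp inequality: a nontrivial solution of the Caputo Dirichlet problem on $[a,b]$ forces $\int_a^b|q(s)|\,ds>\Gamma(\alpha)\alpha^\alpha/[(\alpha-1)(b-a)]^{\alpha-1}$, which is the Green's-function Lyapunov bound of \cite{Ferreira1}. With $a=0$, $b=1$, $q\equiv\lambda$ this reads $\lambda>\Gamma(\alpha)\alpha^\alpha/(\alpha-1)^{\alpha-1}$, that is $|z_0|>\Gamma(\alpha)\alpha^\alpha/(\alpha-1)^{\alpha-1}$; the strictness of the inequality also rules out the endpoint value $\lambda=\Gamma(\alpha)\alpha^\alpha/(\alpha-1)^{\alpha-1}$. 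By contraposition, $E_{\alpha,2}$ has no real zero in $[-\Gamma(\alpha)\alpha^\alpha/(\alpha-1)^{\alpha-1},0)$. The normalization $b=1$ loses nothing, since for a general interval the scaling relation $\lambda b^\alpha=|z_0|$ cancels out.

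The main obstacle is the eigenfunction identity and, in particular, the choice of derivative. One must justify the termwise action of the fractional operator on the series (convergence, together with the regularity $u\in C^2$ that the Lyapunov bound presupposes) and confirm that no singular remainder survives. This is precisely where Caputo is essential: the Riemann--Liouville derivative of $t\,E_{\alpha,2}(-\lambda t^\alpha)$ produces an inhomogeneous term $t^{1-\alpha}/\Gamma(2-\alpha)$, so that function solves instead the mixed equation $x''+\lambda(D_0^{2-\alpha}x)=0$ of Section \ref{sect1}; applying Theorem \ref{thm0} to it yields only $|z_0|>\Gamma(\alpha)(\alpha+1)$, which is weaker than the asserted bound for $\alpha$ near $2$, so the de la Vallée Poussin constant cannot replace the sharp Green's-function constant here.
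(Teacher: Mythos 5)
Your argument is correct, but note that the paper itself does not prove Theorem \ref{thm69}: it is recalled verbatim from \cite{Ferreira1} ``for the sake of completeness'', and what you have written is essentially a reconstruction of the proof given in that reference rather than an alternative to anything in this paper. The key steps all check out: $u(t)=t\,E_{\alpha,2}(-\lambda t^\alpha)$ has $u''(t)=O(t^{\alpha-1})$ continuous on $[0,1]$, termwise application of the Caputo operator is licensed by locally uniform convergence of the differentiated series, ${}^{C}D_0^\alpha$ annihilates the $k=0$ term $t$, and $u(t)/t\to 1$ shows nontriviality; the strict Lyapunov inequality of \cite{Ferreira1} with $q\equiv\lambda$ then gives $\lambda>\Gamma(\alpha)\alpha^\alpha/(\alpha-1)^{\alpha-1}$, and the strictness is what lets the closed left endpoint into the stated interval. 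Your closing remark is also the right diagnosis of how this theorem sits in the present paper: the same function is an eigenfunction of the Riemann--Liouville mixed problem $x''+\lambda(D_0^{2-\alpha}x)=0$, and running that through Theorem \ref{thm0} yields only $\lambda\geq\Gamma(\alpha)(1+\alpha)$ --- which is precisely the content of Theorem \ref{thmzeros}, an improvement of Theorem \ref{thm69} only for $\alpha<\overline{\alpha}\approx 1.447$ and weaker near $\alpha=2$. The one caution worth recording is that the constant you invoke is the Caputo (Green's-function) constant of \cite{Ferreira1}, not the Riemann--Liouville constant $\Gamma(\alpha)(4/(b-a))^{\alpha-1}$ of \cite{Ferreira0}; you have attributed it to the correct source, so the proof stands.
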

In order to complete our goal, we first need the following
\begin{lem}\label{lem000}
Define the function
$$f(x)=\frac{x^x}{(x-1)^{x-1}},\quad x\in(1,2].$$
There exists a unique $x^\star\in(1,2)$ such that
$$f(x)<x+1,\ \forall x\in(1,x^\star),\ \mbox{and}\ f(x)>x+1,\ \forall x\in(x^\star,2].$$
\end{lem}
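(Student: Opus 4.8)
The plan is to study the function $h(x) = f(x) - (x+1)$ on the interval $(1,2]$ and show it changes sign exactly once, from negative to positive. I would first analyze the behavior of $f$ near the endpoints. As $x \to 1^+$, the factor $(x-1)^{x-1} \to 1$ (since $(x-1)^{x-1} = e^{(x-1)\ln(x-1)} \to e^0 = 1$) and $x^x \to 1$, so $f(x) \to 1$, whereas $x+1 \to 2$; hence $h(x) \to 1 - 2 = -1 < 0$ near $x=1$. At the right endpoint, $f(2) = 2^2/1^1 = 4$ while $x+1 = 3$, so $h(2) = 4 - 3 = 1 > 0$. By continuity of $h$ on $(1,2]$ and the intermediate value theorem, there exists at least one $x^\star \in (1,2)$ with $h(x^\star)=0$.

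The main work is to upgrade existence to \emph{uniqueness}, and for this I would establish that $h$ is strictly increasing, or at least that it has no interior local extrema that would allow extra crossings. The cleanest route is to compute the logarithmic derivative of $f$. Writing $\ln f(x) = x\ln x - (x-1)\ln(x-1)$, differentiation gives
\begin{equation*}
\frac{f'(x)}{f(x)} = \ln x + 1 - \ln(x-1) - 1 = \ln\frac{x}{x-1},
\end{equation*}
so that $f'(x) = f(x)\,\ln\bigl(x/(x-1)\bigr)$. Since $x/(x-1) > 1$ on $(1,2]$, both $f(x)>0$ and the logarithm are positive, so $f'(x) > 0$ and $f$ is strictly increasing. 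To conclude I would show $f'(x) > 1$ throughout $(1,2]$, which would make $h'(x) = f'(x) - 1 > 0$ and force $h$ to be strictly increasing, yielding a unique zero. This reduces to verifying $f(x)\ln\bigl(x/(x-1)\bigr) > 1$ on $(1,2]$.

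The hard part will be establishing the inequality $f(x)\ln\bigl(x/(x-1)\bigr) > 1$ uniformly, because near $x=1^+$ we have $f(x)\to 1$ while $\ln\bigl(x/(x-1)\bigr) \to +\infty$, so the product blows up there, but the behavior in the middle of the interval and near $x=2$ (where $f(2)\ln 2 = 4\ln 2 \approx 2.77 > 1$) must still be controlled. I expect the product $f(x)\ln\bigl(x/(x-1)\bigr)$ to be itself monotone or at least bounded below by $1$; to prove this rigorously I would either analyze its derivative directly, or — should global monotonicity of $h$ prove delicate — fall back on the weaker argument that suffices for uniqueness: show that $g(x) := \ln f(x) - \ln(x+1)$ has a derivative $\ln\bigl(x/(x-1)\bigr) - 1/(x+1)$ that is positive on $(1,2]$, so $\ln f - \ln(x+1)$ is strictly increasing; since $f$ and $x+1$ are both positive, $f(x) < x+1$ and $f(x) > x+1$ are equivalent to $g(x)<0$ and $g(x)>0$ respectively, and strict monotonicity of $g$ together with the sign change of $g$ between the endpoints pins down a single crossing point $x^\star$. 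This logarithmic reformulation is the key simplification, since the inequality $\ln\bigl(x/(x-1)\bigr) > 1/(x+1)$ on $(1,2]$ is far more tractable than the raw product estimate.
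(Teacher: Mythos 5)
Your fallback argument is correct and is genuinely different from (and lighter than) the paper's. The paper proves that $f$ is increasing and \emph{concave} on $(1,2]$ --- which requires computing $f''$ and controlling the auxiliary quantity $x(\ln x-\ln(x-1))^2-1/(x-1)$ via yet another derivative --- and then uses the fact that a concave function minus a line, being negative near $1$ and positive at $2$, crosses zero exactly once. You instead pass to $g(x)=\ln f(x)-\ln(x+1)$ and need only first derivatives: $g'(x)=\ln\bigl(x/(x-1)\bigr)-1/(x+1)$. The one step you leave as merely ``tractable'' should actually be written down, but it is a one-liner: for $x\in(1,2]$ one has $x/(x-1)=1+1/(x-1)\geq 2$, hence $\ln\bigl(x/(x-1)\bigr)\geq\ln 2>1/2>1/(x+1)$, so $g'>0$; combined with $g(x)\to-\ln 2<0$ as $x\to1^+$ and $g(2)=\ln(4/3)>0$, strict monotonicity of $g$ gives a unique zero $x^\star$ and the stated sign pattern on either side of it. I would drop your first route (showing $f'>1$ on all of $(1,2]$): it is true, but the natural way to establish it is to observe that $f'$ is decreasing (i.e.\ $f$ is concave) with $f'(2)=4\ln 2>1$, which lands you right back at the paper's second-derivative computation; the logarithmic reformulation makes all of that unnecessary.
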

\begin{proof}
The function $g(x)=x+1$ is a straight line with $g(1)=2$ and $g(2)=3$. Now we show that $f$ is an increasing and concave function, with $\lim_{x\rightarrow 1}f(x)=1$ and $f(2)=4$, which in turn proves the result.

First, note that $x^x=e^{x\ln(x)}$. Therefore, $\lim_{x\rightarrow 0}x^x=1$, hence $\lim_{x\rightarrow 1}f(x)=1$.
Now, standard calculations show that
$$f'(x)=\frac{x^x}{(x-1)^{x-1}}(\ln(x)-\ln(x-1)).$$
Since $x/(x-1)>1$, then $f'>0$ and that shows that $f$ is increasing. Differentiating again and performing some simplifications, we obtain
$$f''(x)=\frac{x^{x-1}}{(x-1)^{x-1}}\left(x(\ln(x)-\ln(x-1))^2-\frac{1}{x-1}\right).$$
Defining the auxiliary function
$$h(x)=x(\ln(x)-\ln(x-1))^2-\frac{1}{x-1},$$
and differentiating it, we see that
$$h'(x)=\frac{((1-x)\ln(x-1)-1+(x-1)\ln(x))^2}{(x-1)^2}>0,\quad x\in(1,2].$$
Since $h(2)<0$ we conclude that $h(x)<0$ on $(1,2]$, i.e. $f''<0$ or, in other words, $f$ is concave on $(1,2]$. The proof is done.
\end{proof}
\begin{rem}
A numerical approximation of $x^\star$ of the previous lemma is given\footnote{This value was calculated using Maple Software} by 1.447.
\end{rem}
The following result improves Theorem \ref{thm69} in the sense that, for certain values of the parameter $\alpha$, the given Mittag--Leffler function cannot have zeros on a larger interval of real numbers.

\begin{thm}\label{thmzeros}
Let $1<\alpha<\overline{\alpha}$, where $\overline{\alpha}\in(1,2)$ is defined implicitly by $\frac{\overline{\alpha}^{\overline{\alpha}}}{(\overline{\alpha}-1)^{\overline{\alpha}-1}}=\overline{\alpha}+1$. Then, the Mittag--Leffler function $E_{\alpha,2}(x)$
has no real zeros for $$x\in\left(-\Gamma(\alpha)(1+\alpha),0\right)\supset\left[-\Gamma(\alpha)\frac{\alpha^\alpha}{(\alpha-1)^{\alpha-1}},0\right).$$
\end{thm}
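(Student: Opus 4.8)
The plan is to show that Theorem \ref{thmzeros} follows by combining Theorem \ref{thm69} with the strict inequality established in Lemma \ref{lem000}. The key observation is that the interval appearing in Theorem \ref{thm69} is governed by the quantity $f(\alpha)=\alpha^\alpha/(\alpha-1)^{\alpha-1}$, so I would compare this quantity against the linear expression $\alpha+1$ that defines the interval in the present statement. Indeed, the inclusion $\left(-\Gamma(\alpha)(1+\alpha),0\right)\supset\left[-\Gamma(\alpha)f(\alpha),0\right)$ asserted in the theorem is equivalent to the inequality $\Gamma(\alpha)(1+\alpha)\geq \Gamma(\alpha)f(\alpha)$, i.e. $f(\alpha)\leq \alpha+1$, for $\alpha$ in the stated range. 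Since $\Gamma(\alpha)>0$ throughout $(1,2)$, the geometric claim about intervals reduces entirely to the scalar comparison $f(\alpha)< \alpha+1$.

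First I would invoke Lemma \ref{lem000}, which guarantees the existence of a unique $x^\star\in(1,2)$ with $f(x)<x+1$ for all $x\in(1,x^\star)$. Next I would identify the threshold $\overline{\alpha}$ defined in the statement, namely the implicit solution of $f(\overline{\alpha})=\overline{\alpha}+1$, with the crossing point $x^\star$ of the lemma. Because $f$ is increasing and concave while $g(x)=x+1$ is linear, with $\lim_{x\to 1^+}f(x)=1<2=g(1)$ and $f(2)=4>3=g(2)$, the two graphs cross exactly once, so $\overline{\alpha}=x^\star$ is well defined and unique. Consequently, for any $\alpha$ with $1<\alpha<\overline{\alpha}$ we have the strict inequality $f(\alpha)=\alpha^\alpha/(\alpha-1)^{\alpha-1}<\alpha+1$, which yields the strict inclusion of intervals.

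Having established $f(\alpha)<\alpha+1$, I would conclude as follows. Theorem \ref{thm69} tells us that $E_{\alpha,2}(x)$ has no real zeros on $\left[-\Gamma(\alpha)f(\alpha),0\right)$. Since $\Gamma(\alpha)(1+\alpha)>\Gamma(\alpha)f(\alpha)$, this zero-free interval is contained in the larger interval $\left(-\Gamma(\alpha)(1+\alpha),0\right)$. To upgrade the zero-free property to the whole larger interval, I would need to verify that no new zeros are introduced on the additional piece $\left(-\Gamma(\alpha)(1+\alpha),-\Gamma(\alpha)f(\alpha)\right)$. This is precisely where the de la Vallée Poussin machinery of this paper enters: one argues that a real zero of $E_{\alpha,2}$ at a point $x_0<0$ would, via the standard correspondence between such Mittag--Leffler zeros and the existence of a nontrivial solution of a fractional boundary value problem of the form \eqref{eq6969}--\eqref{bc0} on an interval of length determined by $|x_0|$, contradict the sharper Lyapunov-type inequality derived earlier in the paper. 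The bound $\Gamma(\alpha)(1+\alpha)$ is exactly the threshold produced by that inequality.

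The main obstacle I anticipate is making rigorous the link between a real negative zero of $E_{\alpha,2}$ and a nontrivial solution of the associated fractional boundary value problem, so that the improved inequality of this paper can be applied; the interval-comparison via Lemma \ref{lem000} is routine once that correspondence is in place. I would therefore devote the bulk of the argument to exhibiting the eigenvalue-type problem whose boundary-value solvability forces $|x_0|\geq \Gamma(\alpha)(1+\alpha)$, and only then read off the enlarged zero-free interval together with the stated strict inclusion.
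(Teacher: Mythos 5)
Your plan is correct and follows essentially the same route as the paper: Lemma \ref{lem000} handles the inclusion, and the zero-free interval comes from applying the de la Vallée Poussin inequality of Theorem \ref{thm0} (in the non-strict form of Remark \ref{rem90}) to the eigenvalue problem $x''+\lambda (D_0^\beta x)=0$, $x(0)=0=x(1)$, with $\beta=2-\alpha$ and the interval fixed at $[0,1]$ so that the relevant zero of $E_{2-\beta,2}$ is exactly $-\lambda$. The two steps you defer are precisely what the paper supplies: the correspondence between negative zeros of $E_{2-\beta,2}$ and nontrivial solutions is \cite[Corollary 5.3]{Kilbas}, and the threshold is the elementary evaluation $\max\left\{\int_0^1\frac{s^{2-\beta}}{\Gamma(2-\beta)}\,ds,\ \int_0^1\frac{s^{1-\beta}}{\Gamma(2-\beta)}(1-s)\,ds\right\}=\frac{1}{\Gamma(2-\beta)(3-\beta)}$, which yields $\lambda\geq\Gamma(2-\beta)(3-\beta)=\Gamma(\alpha)(1+\alpha)$.
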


\begin{proof}
By Lemma \ref{lem000}, the number $\overline{\alpha}$ is well defined.

Consider $a=0$ and $b=1$. Let $f=0$ in \eqref{eq0} and suppose that $x$ is a nontrivial solution of the following BVP
\begin{align*}
    x''(t)+\lambda(D_0^\beta x)(t)&=0,\quad t\in(0,1),\ \beta\in(0,1),\ \lambda\in\mathbb{R},\\
    x(0)=&0=x(1).
\end{align*}
By \cite[Corollary 5.3]{Kilbas} we may conclude that $\lambda$ must satisfy $E_{2-\beta,2}(-\lambda)=0$. It is clear that, if such $\lambda$ exist, it must be positive. By Theorem \ref{thm0} and Remark \ref{rem90}, we get that
$$1\leq\lambda\max\left\{\int_0^{1} \frac{s^{2-\beta}}{\Gamma(2-\beta)}ds,\int_0^{1} \frac{s^{1-\beta}}{\Gamma(2-\beta)}(1-s)ds\right\}=\frac{\lambda}{\Gamma(2-\beta)}\frac{1}{3-\beta}.$$
Therefore, putting $\alpha=2-\beta$ we conclude that if $x\in(-\Gamma(\alpha)(1+\alpha),0)$, then $E_{\alpha,2}(x)$ cannot have zeros. Since $\alpha<\overline{\alpha}$ we know, by Lemma \eqref{lem000}, that 
$$\frac{\alpha^\alpha}{(\alpha-1)^{\alpha-1}}<\alpha+1,$$
which concludes the proof.
\end{proof}

\subsection{The equation $(D_a^\alpha x)+g(t)(D_a^\beta x)+f(t)x=0$}

In this section we shall consider the following boundary value problem:
\begin{align}
    (D_a^\alpha x)+g(t)(D_a^\beta x)&+f(t)x=0,\quad t\in(a,b),\ \beta\in(0,1],\ \alpha\in(1, 2],\label{eq00}\\
    x(a)=&0=x(b),\label{eq11}
\end{align}
where $f,g\in C[a,b]$ and $\alpha-\beta-1\geq 0$. This BVP brings many differences in its study when compared to the one described in Section \ref{sect1}. For example, now, we don't even expect to have continuously differentiable solutions on $[a,b]$. But more importantly, the analysis becomes much more complex and we could not obtain a \emph{sharp} result, in the sense that, when $\alpha=2$ and $\beta=1$, our result would reduce to the one by Hartman and Wintner (cf. Corollary \ref{corHartman}). Nevertheless, our results generalize the well known de la Vallée Poussin inequality as well as the Fractional Lyapunov inequality.

We prove a series of lemmas before stating (and proving) our main result.

\begin{lem}
Let $x\in E_\beta:=\{f\in C^1(a,b]\cap C[a,b]:(D_a^\beta f)\in C[a,b]\}$ be a solution of \eqref{eq00}--\eqref{eq11}. Put $G(t)=g(t)(D_a^\beta x)(t)+f(t)x(t)$. Then,
\begin{multline}\label{porra1}
    (D_a^\beta x)(t)=\frac{1}{\Gamma(\alpha-\beta)}\left\{\int_a^t\left[\frac{(t-a)^{\alpha-\beta-1}(b-s)^{\alpha-1}}{(b-a)^{\alpha-1}}-(t-s)^{\alpha-\beta-1}\right]G(s)ds\right.\\ \left.+\int_t^b\frac{(t-a)^{\alpha-\beta-1}(b-s)^{\alpha-1}}{(b-a)^{\alpha-1}}G(s)ds\right\}
\end{multline}
\end{lem}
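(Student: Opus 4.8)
The plan is to invert the fractional equation to get an explicit representation of $x$, impose the boundary conditions to pin down the integration constants, and then apply the operator $D_a^\beta$ term by term. Writing \eqref{eq00} as $(D_a^\alpha x)(t)=-G(t)$, the standard inversion theory for Riemann--Liouville equations of order $\alpha\in(1,2]$ yields the general solution
$$x(t)=c_1(t-a)^{\alpha-1}+c_2(t-a)^{\alpha-2}-(I_a^\alpha G)(t),\quad c_1,c_2\in\mathbb{R}.$$
Since $x\in C[a,b]$ and $\alpha-2\in(-1,0]$, the term $(t-a)^{\alpha-2}$ is unbounded at $t=a$ when $\alpha<2$ (and violates $x(a)=0$ when $\alpha=2$), forcing $c_2=0$; the condition $x(a)=0$ then holds automatically because $(I_a^\alpha G)(a)=0$. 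Imposing $x(b)=0$ gives $c_1=(b-a)^{-(\alpha-1)}(I_a^\alpha G)(b)$.

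Next I would differentiate the resulting $x(t)=c_1(t-a)^{\alpha-1}-(I_a^\alpha G)(t)$. For the power term I would use the Riemann--Liouville rule $D_a^\beta (t-a)^{\alpha-1}=\frac{\Gamma(\alpha)}{\Gamma(\alpha-\beta)}(t-a)^{\alpha-\beta-1}$, which is valid since $\alpha-1>-1$. For the integral term I would invoke Proposition \ref{Prop} with $p=\beta$ and $q=\alpha$: because $G=g(D_a^\beta x)+fx$ is continuous on $[a,b]$ (as $f,g\in C[a,b]$ and $x\in E_\beta$) and $\alpha\geq\beta>0$, one obtains $D_a^\beta(I_a^\alpha G)=I_a^{\alpha-\beta}G$. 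Together these give
$$(D_a^\beta x)(t)=\frac{c_1\Gamma(\alpha)}{\Gamma(\alpha-\beta)}(t-a)^{\alpha-\beta-1}-\frac{1}{\Gamma(\alpha-\beta)}\int_a^t(t-s)^{\alpha-\beta-1}G(s)\,ds.$$

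Finally I would substitute $c_1=(b-a)^{-(\alpha-1)}\Gamma(\alpha)^{-1}\int_a^b(b-s)^{\alpha-1}G(s)\,ds$, split the range as $\int_a^b=\int_a^t+\int_t^b$, and combine the two $\int_a^t$ contributions to arrive at \eqref{porra1}. Here the hypothesis $\alpha-\beta-1\geq 0$ plays its role: it keeps both $(t-a)^{\alpha-\beta-1}$ and the kernel $(t-s)^{\alpha-\beta-1}$ bounded near the endpoint, so that $D_a^\beta x$ is continuous on $[a,b]$, consistent with $x\in E_\beta$. The main delicate point I anticipate is not the algebra but the legitimacy of the term-by-term fractional differentiation, namely justifying $c_2=0$ from continuity of $x$ at $a$ and checking that the hypotheses of Proposition \ref{Prop} (continuity of $G$ and $\alpha\geq\beta$) genuinely hold; once these are secured, the remaining computation is routine.
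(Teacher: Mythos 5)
Your proposal is correct and follows essentially the same route as the paper: invert the equation to the integral form $x(t)=c(t-a)^{\alpha-1}-(I_a^\alpha G)(t)$, fix the constant via $x(b)=0$, and apply $D_a^\beta$ using the power rule together with Proposition \ref{Prop}. The only difference is that you spell out why the $(t-a)^{\alpha-2}$ term must vanish, a point the paper subsumes under ``it is standard''; this is a welcome extra detail, not a different argument.
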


\begin{proof}
It is standard that $x\in E_\beta$ is a solution of \eqref{eq00}--\eqref{eq11} if and only if it satisfies the integral equation
$$x(t)=c(t-a)^{\alpha-1}-\frac{1}{\Gamma(\alpha)}\int_a^t(t-s)^{\alpha-1}G(s)ds.$$
The boundary condition at $t=b$ in \eqref{eq11} determines the constant $c$ and we get, 
$$x(t)=\frac{(t-a)^{\alpha-1}}{(b-a)^{\alpha-1}\Gamma(\alpha)}\int_a^b(b-s)^{\alpha-1}G(s)ds-\frac{1}{\Gamma(\alpha)}\int_a^t(t-s)^{\alpha-1}G(s)ds.$$
Finally, applying the Riemann--Liouville fractional derivative operator to both sides of the previous equality and having in mind that $(D_a^\beta (s-a)^{\alpha-1})(t)=\frac{\Gamma(\alpha)(t-a)^{\alpha-\beta-1}}{\Gamma(\alpha-\beta)}$ and Proposition \ref{Prop}, we get \eqref{porra1}.
\end{proof}

\begin{lem}\label{lem70}
Suppose that $\alpha-\beta-1\geq 0$. Define the function
$$f(t,s)=\frac{(t-a)^{\alpha-\beta-1}(b-s)^{\alpha-1}}{(b-a)^{\alpha-1}}-(t-s)^{\alpha-\beta-1},\ a\leq s\leq t\leq b.$$ Then,
$$|f(t,s)|\leq\max\left\{\frac{(s-a)^{\alpha-\beta-1}(b-s)^{\alpha-1}}{(b-a)^{\alpha-1}}:\alpha-\beta-1>0,(b-s)^{\alpha-\beta-1}-\frac{(b-s)^{\alpha-1}}{(b-a)^\beta}\right\}.$$
\end{lem}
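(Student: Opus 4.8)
The plan is to bound the function
$$f(t,s)=\frac{(t-a)^{\alpha-\beta-1}(b-s)^{\alpha-1}}{(b-a)^{\alpha-1}}-(t-s)^{\alpha-\beta-1}$$
on the triangular region $a\le s\le t\le b$ by examining the two competing terms separately, since the claimed bound is the maximum of two nonnegative quantities. Write $f=A(t,s)-B(t,s)$ where $A(t,s)=\frac{(t-a)^{\alpha-\beta-1}(b-s)^{\alpha-1}}{(b-a)^{\alpha-1}}\ge 0$ and $B(t,s)=(t-s)^{\alpha-\beta-1}\ge 0$ (both nonnegative once $\alpha-\beta-1\ge 0$). The first thing I would establish is the upper bound $f(t,s)\le A(t,s)$, which is immediate from $B\ge 0$; the task is then to maximize $A$ over the admissible region. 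For fixed $s$, the factor $(t-a)^{\alpha-\beta-1}$ is nondecreasing in $t$ (using $\alpha-\beta-1\ge 0$), so $A$ is maximized by pushing $t$ to its largest admissible value. The constraint is $s\le t\le b$, so one is tempted to set $t=b$; but I expect that the correct maximizer is instead $t=s$, giving $A(s,s)=\frac{(s-a)^{\alpha-\beta-1}(b-s)^{\alpha-1}}{(b-a)^{\alpha-1}}$, which matches the first entry in the claimed maximum. The reason $t=s$ rather than $t=b$ should be examined carefully, and resolving this is where I expect the subtlety to lie.

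For the lower bound I would seek to show $f(t,s)\ge -\left[(b-s)^{\alpha-\beta-1}-\frac{(b-s)^{\alpha-1}}{(b-a)^{\beta}}\right]$, i.e. that $-f(t,s)$ does not exceed the second quantity in the maximum. Since $-f=B-A$, and $A\ge 0$, one has $-f\le B=(t-s)^{\alpha-\beta-1}$; I would then try to exploit the constraint $t\le b$, which gives $t-s\le b-s$ and hence $B\le (b-s)^{\alpha-\beta-1}$ (again using monotonicity of $u\mapsto u^{\alpha-\beta-1}$ for the nonnegative exponent). This already produces a bound of the form $(b-s)^{\alpha-\beta-1}$, so the remaining work is to account for the subtracted term $\frac{(b-s)^{\alpha-1}}{(b-a)^{\beta}}$. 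That correction should come from retaining part of $A$ rather than discarding it: evaluating or lower-bounding the term $A(t,s)$ at the relevant boundary value of $t$ and simplifying $\frac{(b-s)^{\alpha-1}}{(b-a)^{\alpha-1}}\cdot(\text{power of }(t-a))$ to recover the factor $(b-a)^{-\beta}$.

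Concretely, I would split the analysis according to the sign of $f(t,s)$. On the set where $f\ge 0$, I use $f\le A(t,s)$ and then verify that $\max_t A(t,s)$ equals the first entry of the stated maximum; on the set where $f<0$, I use $|f|=-f=B-A$ and bound this by the second entry. The two endpoint evaluations in $t$ (at $t=s$ for the first case and at $t=b$ for the second) are what generate the two arguments of the outer $\max$, and taking the maximum over both cases yields the inequality. I would verify monotonicity in $t$ by direct differentiation of $f(t,s)$ in $t$, computing $\partial_t f=(\alpha-\beta-1)\bigl[(t-a)^{\alpha-\beta-2}\frac{(b-s)^{\alpha-1}}{(b-a)^{\alpha-1}}-(t-s)^{\alpha-\beta-2}\bigr]$, and reading off its sign to locate the extrema; some care is needed in the boundary case $\alpha-\beta-1=0$, where $B\equiv 1$ and several terms degenerate.

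The main obstacle I anticipate is the sign analysis of $\partial_t f$ and the correct identification of the maximizer: because $f$ is a difference of two monotone-in-$t$ terms with the same exponent $\alpha-\beta-1$, its derivative changes sign in a way governed by comparing $(t-a)^{\alpha-\beta-2}(b-s)^{\alpha-1}(b-a)^{1-\alpha}$ against $(t-s)^{\alpha-\beta-2}$, and pinning down whether the supremum of $f$ is attained at $t=s$, at $t=b$, or at an interior critical point is delicate. This is precisely the step that forces the bound to be a maximum of two expressions rather than a single one, and I would spend the bulk of the effort making this case distinction rigorous, including the degenerate situations $\alpha-\beta-1=0$ and $s=a$.
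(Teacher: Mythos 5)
Your proposal does not close the argument, and the route you lean on most heavily would in fact fail. Bounding the two terms of $f=A-B$ separately cannot produce the stated estimate: since $\alpha-\beta-1\geq 0$, the factor $(t-a)^{\alpha-\beta-1}$ is nondecreasing in $t$, so $\max_{t\in[s,b]}A(t,s)=A(b,s)=\frac{(b-s)^{\alpha-1}}{(b-a)^{\beta}}$, \emph{not} $A(s,s)$; for $s$ near $a$ this is close to $(b-a)^{\alpha-\beta-1}>0$ while the first entry of the claimed maximum tends to $0$, so ``$f\leq A$ and maximize $A$'' overshoots the bound. Restricting to the set where $f\geq 0$ does not repair this, because that set is not confined to $t=s$. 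Symmetrically, $-f\leq B-\min_{t}A=(b-s)^{\alpha-\beta-1}-A(s,s)$ is weaker than the second entry, since $A(s,s)\leq A(b,s)$. The point of the lemma is precisely that $A$ and $B$ must be evaluated at the \emph{same} value of $t$, and the only way you indicate to exploit that --- showing that $f$ itself is monotone in $t$ so that $|f|$ is controlled by its values at the two endpoints $t=s$ and $t=b$ --- is exactly the step you leave unexecuted, describing it as ``delicate'' and ``the main obstacle.'' That sign analysis is the entire content of the proof, so what you have is a correct identification of where the difficulty sits, not a proof.

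For comparison, the paper's argument is: for $\alpha-\beta-1>0$ one shows $f_t(t,s)\leq 0$ on the whole triangle, whence $f$ decreases from $f(s,s)=A(s,s)\geq 0$ to $f(b,s)=\frac{(b-s)^{\alpha-1}}{(b-a)^{\beta}}-(b-s)^{\alpha-\beta-1}\leq 0$ and therefore $|f(t,s)|\leq\max\{f(s,s),-f(b,s)\}$, which is the stated bound. The sign of $f_t$ is obtained by rewriting
$$(t-s)^{\alpha-\beta-2}=\frac{(t-a)^{\alpha-\beta-2}}{(b-a)^{\alpha-\beta-2}}\left(b-\left(a+\frac{(s-a)(b-a)}{t-a}\right)\right)^{\alpha-\beta-2},$$
then using that $\alpha-\beta-2<0$ makes $u\mapsto u^{\alpha-\beta-2}$ decreasing together with $a+\frac{(s-a)(b-a)}{t-a}\geq s$ and $\frac{(b-s)^{\alpha-1}}{(b-a)^{\beta+1}}\leq (b-s)^{\alpha-\beta-2}$. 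The case $\alpha-\beta-1=0$, which you rightly flag, is immediate since $f$ is then independent of $t$ and equals $\frac{(b-s)^{\alpha-1}}{(b-a)^{\alpha-1}}-1$, the negative of the second entry. If you carry out the sign analysis of $f_t$ along these lines, your outline becomes the paper's proof; without it, there is a genuine gap.
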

\begin{proof}
We start by noticing that, if $\alpha-\beta-1=0$, then 
$$|f(t,s)|=\left|\frac{(b-s)^{\alpha-1}}{(b-a)^{\alpha-1}}-1\right|=1-\frac{(b-s)^{\alpha-1}}{(b-a)^{\alpha-1}}.$$
Suppose now that $\alpha-\beta-1>0$. Differentiating $f$ with respect to t and make some rearrangements gives
\begin{align*}
f_t(t,s)&=\frac{(\alpha-\beta-1)(t-a)^{\alpha-\beta-2}(b-s)^{\alpha-1}}{(b-a)^{\alpha-1}}-(\alpha-\beta-1)(t-s)^{\alpha-\beta-2},\quad a\leq s<t\leq b,\\
&=\frac{(\alpha-\beta-1)(t-a)^{\alpha-\beta-2}(b-s)^{\alpha-1}}{(b-a)^{\alpha-1}}\\
&\hspace{3cm}-(\alpha-\beta-1)\frac{(t-a)^{\alpha-\beta-2}}{(b-a)^{\alpha-\beta-2}}\left(b-\left(a+\frac{(s-a)(b-a)}{t-a}\right)\right)^{\alpha-\beta-2}\\
&=\frac{(\alpha-\beta-1)(t-a)^{\alpha-\beta-2}}{(b-a)^{\alpha-\beta-2}}\left[\frac{(b-s)^{\alpha-1}}{(b-a)^{\beta+1}}-\left(b-\left(a+\frac{(s-a)(b-a)}{t-a}\right)\right)^{\alpha-\beta-2}\right].
\end{align*}
Now, it is easy to see that 
$$a+\frac{(s-a)(b-a)}{t-a}\geq s\iff s\geq a,$$
hence
$$f_t(t,s)\leq \frac{(\alpha-\beta-1)(t-a)^{\alpha-\beta-2}}{(b-a)^{\alpha-\beta-2}}\left[\frac{(b-s)^{\alpha-1}}{(b-a)^{\beta+1}}-(b-s)^{\alpha-\beta-2}\right].$$
Observe now that 
$$\frac{(b-s)^{\alpha-1}}{(b-a)^{\beta+1}}-(b-s)^{\alpha-\beta-2}\leq 0\iff s\geq a,$$
which implies that $f_t(t,s)\leq 0$, i.e. $f$ is a decreasing function. Therefore,
$$|f(t,s)|\leq\max\{f(s,s),|f(b,s)|\},$$
from which the result follows.
\end{proof}
\begin{lem}\label{90}
Let $\alpha-\beta-1\geq 0$. Suppose that $G:[a,b]\rightarrow\mathbb{R}_0^+$. Define $F:[a,b]\rightarrow\mathbb{R}_0^+$ by
\begin{multline*}
F(t)=\\
\int_a^t\max\left\{\frac{(s-a)^{\alpha-\beta-1}(b-s)^{\alpha-1}}{(b-a)^{\alpha-1}}:\alpha-\beta-1>0,(b-s)^{\alpha-\beta-1}-\frac{(b-s)^{\alpha-1}}{(b-a)^\beta}\right\}G(s)ds\\
+\int_t^b\frac{(s-a)^{\alpha-\beta-1}(b-s)^{\alpha-1}}{(b-a)^{\alpha-1}}G(s)ds.
\end{multline*}
Then, 
\begin{multline*}
    F(t)\leq\\
\max\left\{\int_a^b\max\left\{\frac{(s-a)^{\alpha-\beta-1}(b-s)^{\alpha-1}}{(b-a)^{\alpha-1}}:\alpha-\beta-1>0,(b-s)^{\alpha-\beta-1}-\frac{(b-s)^{\alpha-1}}{(b-a)^\beta}\right\}G(s)ds\right.\\
\left.,\int_a^b\frac{(s-a)^{\alpha-\beta-1}(b-s)^{\alpha-1}}{(b-a)^{\alpha-1}}G(s)ds\right\}.
\end{multline*}
\end{lem}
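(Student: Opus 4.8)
The plan is to show that $F$ attains its maximum over $[a,b]$ at one of the endpoints $a$ or $b$, and to identify $F(a)$ and $F(b)$ with the two quantities appearing inside the maximum on the right-hand side. To this end I would abbreviate the two integrands by
$$m(s)=\max\left\{\frac{(s-a)^{\alpha-\beta-1}(b-s)^{\alpha-1}}{(b-a)^{\alpha-1}}:\alpha-\beta-1>0,(b-s)^{\alpha-\beta-1}-\frac{(b-s)^{\alpha-1}}{(b-a)^\beta}\right\},\quad n(s)=\frac{(s-a)^{\alpha-\beta-1}(b-s)^{\alpha-1}}{(b-a)^{\alpha-1}},$$
so that $F(t)=\int_a^t m(s)G(s)\,ds+\int_t^b n(s)G(s)\,ds$. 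Evaluating at the endpoints gives $F(a)=\int_a^b n(s)G(s)\,ds$ and $F(b)=\int_a^b m(s)G(s)\,ds$, which are precisely the two terms in the asserted bound; hence it suffices to prove $F(t)\leq\max\{F(a),F(b)\}$ for every $t\in[a,b]$.

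First I would observe that the exponents satisfy $\alpha-1>0$ and $\alpha-\beta-1\geq 0$, so every power function above is bounded on $[a,b]$ and both integrands are integrable. Differentiating then yields
$$F'(t)=\bigl(m(t)-n(t)\bigr)G(t).$$
Since $G\geq 0$ by hypothesis, the sign of $F'$ is governed entirely by $m(t)-n(t)$, and the whole argument reduces to locating the sign changes of this difference.

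I would then split according to the definition of $m$. If $\alpha-\beta-1>0$, then $m=\max\{n,B\}$ with $B(s)=(b-s)^{\alpha-\beta-1}-\tfrac{(b-s)^{\alpha-1}}{(b-a)^\beta}$, so $m\geq n$ and therefore $F'\geq 0$; thus $F$ is non-decreasing and $F(t)\leq F(b)\leq\max\{F(a),F(b)\}$. In the degenerate case $\alpha-\beta-1=0$ the first entry of the maximum disappears, $\alpha-1=\beta$, and a direct computation gives $m(s)=1-\bigl(\tfrac{b-s}{b-a}\bigr)^\beta$ and $n(s)=\bigl(\tfrac{b-s}{b-a}\bigr)^\beta$, whence $m(s)+n(s)\equiv 1$ and $m(s)-n(s)=1-2\bigl(\tfrac{b-s}{b-a}\bigr)^\beta$. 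Because $\bigl(\tfrac{b-s}{b-a}\bigr)^\beta$ decreases from $1$ to $0$ as $s$ runs from $a$ to $b$, this difference is strictly increasing and passes once from negative to positive; hence $F'\leq 0$ on an initial subinterval and $F'\geq 0$ thereafter, so $F$ first decreases and then increases, and its maximum on $[a,b]$ is attained at an endpoint. In both cases $F(t)\leq\max\{F(a),F(b)\}$, which is exactly the claimed inequality.

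The step I expect to demand the most care is the degenerate case $\alpha-\beta-1=0$: there the easy bound $m\geq n$ no longer holds, and instead one must exploit the identity $m+n\equiv 1$ together with the monotonicity of $(b-s)^\beta$ to show that $m-n$ changes sign exactly once, from negative to positive. It is this single sign change that forces the maximum of $F$ onto the boundary, and hence produces the two-term maximum on the right-hand side.
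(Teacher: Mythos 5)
Your proof is correct, and in the main case it is substantially more economical than the paper's. Both arguments share the same skeleton: compute $F'(t)=(m(t)-n(t))G(t)$ with your notation $m,n$, show that $m-n$ changes sign at most once and only from negative to positive, and conclude that $F$ attains its maximum at an endpoint, where $F(a)=\int_a^b nG$ and $F(b)=\int_a^b mG$ are exactly the two terms in the asserted bound. The difference is in how the sign of $m-n$ is controlled. When $\alpha-\beta-1>0$ you simply observe that $n$ is itself one of the two candidates in the maximum defining $m$, so $m\geq n$ pointwise and $F'\geq 0$ outright; the paper instead carries out a full concavity/convexity analysis of the auxiliary functions $\hat p$ and $\hat r$ to locate the unique crossing point $t^\star$ of the two candidates, and then studies $X=r-p$ via the functions $K$, $K'$, $K''$ --- all of which yields the same conclusion $F'\geq 0$ (namely $F'=0$ before $t^\star$ and $F'\geq 0$ after) but with far more work. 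In the degenerate case $\alpha-\beta-1=0$ your identity $m+n\equiv 1$, giving $m-n=1-2\bigl(\tfrac{b-s}{b-a}\bigr)^{\beta}$ with a single sign change from negative to positive, is a cleaner route to the same single-crossing fact that the paper extracts from the monotonicity of $K$. What the paper's longer analysis buys is the precise location of the switch between the two candidates in the maximum (and of the zero of $X$), but none of that extra information is needed for the stated inequality, so your shortcut loses nothing. The only caveat, which applies equally to the paper's own proof, is that differentiating $F$ under the integral sign tacitly assumes enough regularity of $G$ (continuity suffices, and holds in the lemma's intended application).
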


\begin{proof}
We start by differentiating $F$ on $(a,b)$ to obtain
\begin{multline*}
F'(t)=
\left[\max\left\{\frac{(t-a)^{\alpha-\beta-1}(b-t)^{\alpha-1}}{(b-a)^{\alpha-1}}:\alpha-\beta-1>0,(b-t)^{\alpha-\beta-1}-\frac{(b-t)^{\alpha-1}}{(b-a)^\beta}\right\}\right.\\
\left.-\frac{(t-a)^{\alpha-\beta-1}(b-t)^{\alpha-1}}{(b-a)^{\alpha-1}}\right]G(t).
\end{multline*}
We claim that 
$$p(t)=\frac{(t-a)^{\alpha-\beta-1}(b-t)^{\alpha-1}}{(b-a)^{\alpha-1}},\quad \alpha-\beta-1>0,$$
and 
$$r(t)=(b-t)^{\alpha-\beta-1}-\frac{(b-t)^{\alpha-1}}{(b-a)^\beta},$$
coincide in exactly one point on $(a,b)$: indeed, it is easy to check that
$$p(t)=r(t)\iff\hat{p}(t)=\frac{(t-a)^{\alpha-\beta-1}(b-t)^\beta}{(b-a)^{\alpha-\beta-1}}=(b-a)^{\beta}-(b-t)^{\beta}=\hat{r}(t).$$
Differentiating twice the previous functions, it is not difficult to conclude that $\hat{p}(t)$ is concave while $\hat{r}(t)$ is convex. Noticing that $\hat{p}(a)=\hat{p}(b)=0$ and $\hat{r}(a)=0$, $\hat{r}(b)=(b-a)^\beta>0$ we conclude that $p$ and $r$ coincide in at most one point on $(a,b)$. However, it is not hard to see that $\hat{p}(\frac{a+b}{2})>\hat{r}(\frac{a+b}{2})$ and, since $\hat{p}(b)<\hat{r}(b)$, then continuity implies that there is a point $t^\star\in(\frac{a+b}{2},b)$ such that $p(t^\star)=r(t^\star)$, which concludes the proof of our claim.

Therefore, if 
$$\max\left\{\frac{(t-a)^{\alpha-\beta-1}(b-t)^{\alpha-1}}{(b-a)^{\alpha-1}}:\alpha-\beta-1>0,(b-t)^{\alpha-\beta-1}-\frac{(b-t)^{\alpha-1}}{(b-a)^\beta}\right\}=p(t),$$
then $F'(t)=0$ for all $t\in(a,t^\star)$, which implies that $F(t)=F(a)$ on that interval. On the other hand, if
$$\max\left\{\frac{(t-a)^{\alpha-\beta-1}(b-t)^{\alpha-1}}{(b-a)^{\alpha-1}}:\alpha-\beta-1>0,(b-t)^{\alpha-\beta-1}-\frac{(b-t)^{\alpha-1}}{(b-a)^\beta}\right\}=r(t),$$
then we define the function $X$ by
$$X(t)=r(t)-p(t)=(b-t)^{\alpha-1}\left[(b-t)^{-\beta}-\frac{(t-a)^{\alpha-\beta-1}}{(b-a)^{\alpha-1}}-(b-a)^{-\beta}\right].$$
Let $K(t)=(b-t)^{-\beta}-\frac{(t-a)^{\alpha-\beta-1}}{(b-a)^{\alpha-1}}-(b-a)^{-\beta}$. Then,
$$K'(t)=\beta(b-t)^{-\beta-1}-\frac{(\alpha-\beta-1)(t-a)^{\alpha-\beta-2}}{(b-a)^{\alpha-1}},$$
and
$$K''(t)=\beta(\beta+1)(b-t)^{-\beta-2}-\frac{(\alpha-\beta-1)(\alpha-\beta-2)(t-a)^{\alpha-\beta-3}}{(b-a)^{\alpha-1}}.$$
We see that $K''>0$ on $(a,b)$, which means that $K'$ is increasing. Now, if $\alpha-\beta-1=0$, then $K'>0$, hence $K$ is increasing. Since $K(a)=-\frac{1}{(b-a)^{\alpha-1}}$ and $\lim_{t\rightarrow b} K(t)=\infty$, then $X$ has a unique zero $t_\star\in(a,b)$ and $X(t)<0$ on $(a,t_\star)$, $X(t)>0$ on $(t_\star,b)$. Finally, suppose that $\alpha-\beta-1>0$. Since $\lim_{t\rightarrow a} K'(t)=-\infty$ and $\lim_{t\rightarrow b} K'(t)=\infty$ we conclude that $K'$ has a unique zero $\hat{t}\in(a,b)$. Moreover, we have that  $X(t)<0$ on $(a,\hat{t})$, $X(t)>0$ on $(\hat{t},b)$. Therefore, $F(t)\leq\max\{F(a),F(b)\}$ and the proof is done.
\end{proof}

It follows the main result of this section.

\begin{thm}\label{main1}
Fix $\alpha-\beta-1\geq 0$, with $1<\alpha\leq 2$ and $0<\beta\leq 1$. Suppose that $x\in E_\beta$ is a nontrivial solution of the BVP \eqref{eq00}--\eqref{eq11}. Then, the following inequality holds
\begin{multline*}
 \Gamma(\alpha-\beta)\leq\\
\max\left\{\int_a^b\max\left\{\frac{(s-a)^{\alpha-\beta-1}(b-s)^{\alpha-1}}{(b-a)^{\alpha-1}}:\alpha-\beta-1>0,(b-s)^{\alpha-\beta-1}-\frac{(b-s)^{\alpha-1}}{(b-a)^\beta}\right\}|g(s)|ds\right.\\
\left.,\int_a^b\frac{(s-a)^{\alpha-\beta-1}(b-s)^{\alpha-1}}{(b-a)^{\alpha-1}}|g(s)|ds\right\}\\
+\max\left\{\int_a^b\max\left\{\frac{(s-a)^{\alpha-\beta-1}(b-s)^{\alpha-1}}{(b-a)^{\alpha-1}}:\alpha-\beta-1>0,(b-s)^{\alpha-\beta-1}-\frac{(b-s)^{\alpha-1}}{(b-a)^\beta}\right\}\right.\\
\left.\cdot|f(s)|\frac{(s-a)^\beta}{\Gamma(\beta+1)}ds\right.\\
\left.,\int_a^b\frac{(s-a)^{\alpha-\beta-1}(b-s)^{\alpha-1}}{(b-a)^{\alpha-1}}|f(s)|\frac{(s-a)^\beta}{\Gamma(\beta+1)}ds\right\}.
\end{multline*}
\end{thm}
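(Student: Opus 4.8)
The plan is to evaluate the representation formula \eqref{porra1} at the point where $|(D_a^\beta x)|$ is largest and then collapse the two resulting integrals onto the extremal integrals supplied by Lemma \ref{90}. First I would set $M=\max_{t\in[a,b]}|(D_a^\beta x)(t)|$, which is attained since $D_a^\beta x\in C[a,b]$, and record that $M>0$: as $x(a)=0$, Theorem \ref{thm23} gives for each $t\in(a,b]$ a $\tau\in(a,t)$ with $x(t)=\frac{(t-a)^\beta}{\Gamma(\beta+1)}(D_a^\beta x)(\tau)$, so $M=0$ would force $x\equiv0$, contradicting nontriviality. The same identity yields the pointwise bound $|x(s)|\leq\frac{(s-a)^\beta}{\Gamma(\beta+1)}M$, whence
$$|G(s)|\leq|g(s)|\,|(D_a^\beta x)(s)|+|f(s)|\,|x(s)|\leq M\left[|g(s)|+|f(s)|\frac{(s-a)^\beta}{\Gamma(\beta+1)}\right].$$

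Next I would choose $t_0\in[a,b]$ with $|(D_a^\beta x)(t_0)|=M$ and take absolute values in \eqref{porra1} at $t=t_0$, so that $\Gamma(\alpha-\beta)M$ appears on the left. In the first integral the kernel is exactly the function $f(t_0,s)$ of Lemma \ref{lem70}, so $|f(t_0,s)|$ is replaced by the maximum appearing there. In the second integral $s$ ranges over $[t_0,b]$, so $s\geq t_0$, and the standing hypothesis $\alpha-\beta-1\geq0$ gives $(t_0-a)^{\alpha-\beta-1}\leq(s-a)^{\alpha-\beta-1}$; hence the kernel $\frac{(t_0-a)^{\alpha-\beta-1}(b-s)^{\alpha-1}}{(b-a)^{\alpha-1}}$ is majorized by $\frac{(s-a)^{\alpha-\beta-1}(b-s)^{\alpha-1}}{(b-a)^{\alpha-1}}$, which is precisely the second kernel occurring in Lemma \ref{90}. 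Inserting the bound on $|G(s)|$ and splitting the bracket into its $|g|$-part and its $|f|(s-a)^\beta/\Gamma(\beta+1)$-part produces $\Gamma(\alpha-\beta)M\leq M\,[F_g(t_0)+F_f(t_0)]$, where $F_g$ and $F_f$ denote the function $F$ of Lemma \ref{90} built from the nonnegative weights $G(s)=|g(s)|$ and $G(s)=|f(s)|(s-a)^\beta/\Gamma(\beta+1)$, respectively.

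Finally I would apply Lemma \ref{90} to each piece to obtain $F_g(t_0)\leq\max\{F_g(a),F_g(b)\}$ and $F_f(t_0)\leq\max\{F_f(a),F_f(b)\}$. Since $F(a)$ keeps only the second integral and $F(b)$ only the first, these two maxima are exactly the two braced maxima displayed in the statement. Cancelling the factor $M>0$ then yields the asserted inequality. The one delicate point is the passage from $(t_0-a)^{\alpha-\beta-1}$ to $(s-a)^{\alpha-\beta-1}$ in the second integral: this is what makes the kernels line up with Lemma \ref{90}, and it is precisely where the assumption $\alpha-\beta-1\geq0$ is indispensable. Everything else is bookkeeping with the triangle inequality, the representation \eqref{porra1}, and the fractional mean value theorem.
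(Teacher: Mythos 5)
Your argument is correct and follows essentially the same route as the paper: take absolute values in \eqref{porra1}, bound $|G|$ via the fractional mean value theorem (Theorem \ref{thm23}) in terms of $\mu=\max|D_a^\beta x|$, and then invoke Lemmas \ref{lem70} and \ref{90} to reduce to the two extremal integrals. You are in fact slightly more explicit than the paper on two points it passes over silently — evaluating at the maximizer $t_0$ so that the factor $\mu$ cancels, and the majorization $(t_0-a)^{\alpha-\beta-1}\leq(s-a)^{\alpha-\beta-1}$ in the second integral, which indeed is where $\alpha-\beta-1\geq 0$ is used.
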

\begin{proof}
We have by \eqref{porra1} that
\begin{multline*}
    |(D_a^\beta x)(t)|\Gamma(\alpha-\beta)\leq\left\{\int_a^t\left|\frac{(t-a)^{\alpha-\beta-1}(b-s)^{\alpha-1}}{(b-a)^{\alpha-1}}-(t-s)^{\alpha-\beta-1}\right||G(s)|ds\right.\\ \left.+\int_t^b\frac{(t-a)^{\alpha-\beta-1}(b-s)^{\alpha-1}}{(b-a)^{\alpha-1}}|G(s)|ds\right\},
\end{multline*}
where $G(t)=g(t)(D_a^\beta x)(t)+f(t)x(t)$. Now, let $\mu=\max_{t\in[a,b]}|(D_a^\beta x)(t)|>0$. Using Theorem \ref{thm23}, we get
$$|G(t)|\leq |g(t)|\mu+|f(t)|\frac{(t-a)^\beta}{\Gamma(\beta+1)}\mu.$$
Inserting this inequality in the previous one, we achieve
\begin{multline*}
    \Gamma(\alpha-\beta)\leq\left\{\int_a^t\left|\frac{(t-a)^{\alpha-\beta-1}(b-s)^{\alpha-1}}{(b-a)^{\alpha-1}}-(t-s)^{\alpha-\beta-1}\right|\left[|g(s)|+|f(s)|\frac{(s-a)^\beta}{\Gamma(\beta+1)}\right]ds\right.\\ \left.+\int_t^b\frac{(s-a)^{\alpha-\beta-1}(b-s)^{\alpha-1}}{(b-a)^{\alpha-1}}\left[|g(s)|+|f(s)|\frac{(s-a)^\beta}{\Gamma(\beta+1)}\right]ds\right\}\\
    =\left\{\int_a^t\left|\frac{(t-a)^{\alpha-\beta-1}(b-s)^{\alpha-1}}{(b-a)^{\alpha-1}}-(t-s)^{\alpha-\beta-1}\right||g(s)|ds\right.\\
    \left.+\int_t^b\frac{(s-a)^{\alpha-\beta-1}(b-s)^{\alpha-1}}{(b-a)^{\alpha-1}}|g(s)|ds\right.\\ \left.+\int_a^t\left|\frac{(t-a)^{\alpha-\beta-1}(b-s)^{\alpha-1}}{(b-a)^{\alpha-1}}-(t-s)^{\alpha-\beta-1}\right||f(s)|\frac{(s-a)^\beta}{\Gamma(\beta+1)}ds\right.\\
    \left.+\int_t^b\frac{(s-a)^{\alpha-\beta-1}(b-s)^{\alpha-1}}{(b-a)^{\alpha-1}}|f(s)|\frac{(s-a)^\beta}{\Gamma(\beta+1)}ds\right\}.
\end{multline*}
An application of Lemma \ref{lem70} and afterwards of Lemma \ref{90} finally yields
\begin{multline*}
 \Gamma(\alpha-\beta)\leq\\
\max\left\{\int_a^b\max\left\{\frac{(s-a)^{\alpha-\beta-1}(b-s)^{\alpha-1}}{(b-a)^{\alpha-1}}:\alpha-\beta-1>0,(b-s)^{\alpha-\beta-1}-\frac{(b-s)^{\alpha-1}}{(b-a)^\beta}\right\}|g(s)|ds\right.\\
\left.,\int_a^b\frac{(s-a)^{\alpha-\beta-1}(b-s)^{\alpha-1}}{(b-a)^{\alpha-1}}|g(s)|ds\right\}\\
+\max\left\{\int_a^b\max\left\{\frac{(s-a)^{\alpha-\beta-1}(b-s)^{\alpha-1}}{(b-a)^{\alpha-1}}:\alpha-\beta-1>0,(b-s)^{\alpha-\beta-1}-\frac{(b-s)^{\alpha-1}}{(b-a)^\beta}\right\}\right.\\
\left.\cdot|f(s)|\frac{(s-a)^\beta}{\Gamma(\beta+1)}ds\right.\\
\left.,\int_a^b\frac{(s-a)^{\alpha-\beta-1}(b-s)^{\alpha-1}}{(b-a)^{\alpha-1}}|f(s)|\frac{(s-a)^\beta}{\Gamma(\beta+1)}ds\right\}.
\end{multline*}
The proof is done.
\end{proof}
The following result shows that Theorem \ref{main1} is a generalization of the de la Vallée Poussin inequality.

\begin{cor}
Theorem \ref{VP} is a consequence of Theorem \ref{main1}.
\end{cor}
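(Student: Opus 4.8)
The plan is to obtain Theorem \ref{VP} by specializing Theorem \ref{main1} to the classical exponents $\alpha=2$, $\beta=1$ and then relaxing the resulting (sharper) estimate into the form \eqref{V-P in}. First I would verify admissibility: the constraints $1<\alpha\le 2$, $0<\beta\le 1$ and $\alpha-\beta-1\ge 0$ all hold, with $\alpha-\beta-1=0$ placing us exactly on the boundary of the hypothesis. Since $D_a^2x=x''$ and $D_a^1x=x'$, the differential equation \eqref{eq00} becomes $x''+g(t)x'+f(t)x=0$ and the conditions \eqref{eq11} become $x(a)=0=x(b)$, i.e.\ precisely the BVP of Theorem \ref{VP}; moreover a nontrivial $x\in C^2[a,b]$ satisfies $x\in C^1(a,b]\cap C[a,b]$ with $x'\in C[a,b]$, so $x\in E_1$ and Theorem \ref{main1} applies.

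Next I would evaluate the kernels on the right-hand side of Theorem \ref{main1} at these values. Because $\alpha-\beta-1=0$, the term tagged ``$\alpha-\beta-1>0$'' disappears from the inner maxima, and the surviving expression $(b-s)^{\alpha-\beta-1}-(b-s)^{\alpha-1}/(b-a)^\beta$ collapses to $1-(b-s)/(b-a)=(s-a)/(b-a)$, while the second kernel $(s-a)^{\alpha-\beta-1}(b-s)^{\alpha-1}/(b-a)^{\alpha-1}$ collapses to $(b-s)/(b-a)$. Using also $\Gamma(\alpha-\beta)=\Gamma(1)=1$ and $(s-a)^\beta/\Gamma(\beta+1)=s-a$, the conclusion of Theorem \ref{main1} becomes
\begin{multline*}
1\le \max\left\{\int_a^b\frac{s-a}{b-a}|g(s)|\,ds,\ \int_a^b\frac{b-s}{b-a}|g(s)|\,ds\right\}\\
+\max\left\{\int_a^b\frac{(s-a)^2}{b-a}|f(s)|\,ds,\ \int_a^b\frac{(b-s)(s-a)}{b-a}|f(s)|\,ds\right\}.
\end{multline*}

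I would then bound every integral by pulling out $M_1=\max|g|$ and $M_2=\max|f|$. The elementary values $\int_a^b(s-a)\,ds=\int_a^b(b-s)\,ds=(b-a)^2/2$, together with $\int_a^b(s-a)^2\,ds=(b-a)^3/3$ and $\int_a^b(b-s)(s-a)\,ds=(b-a)^3/6$, show the $g$-maximum is at most $M_1(b-a)/2$ and the $f$-maximum at most $M_2(b-a)^2/3$, whence $1\le M_1(b-a)/2+M_2(b-a)^2/3$.

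Finally I would upgrade this to the strict inequality \eqref{V-P in}. Comparing coefficients, $M_1(b-a)/2\le M_1(b-a)$ and $M_2(b-a)^2/3\le M_2(b-a)^2/2$, with simultaneous equality only when $M_1=M_2=0$; but $M_1=M_2=0$ forces $g\equiv0\equiv f$, so $x''=0$ with $x(a)=0=x(b)$ gives $x\equiv0$, contradicting nontriviality. Hence at least one comparison is strict, yielding $1\le M_1(b-a)/2+M_2(b-a)^2/3< M_1(b-a)+M_2(b-a)^2/2$, which is \eqref{V-P in}. The only genuinely delicate point is this last step: Theorem \ref{main1} furnishes merely a non-strict inequality with \emph{smaller} constants, so recovering the strict sign of de la Vallée Poussin rests entirely on excluding the degenerate case $M_1=M_2=0$, not on sharpening any estimate.
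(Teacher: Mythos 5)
Your proof is correct and follows essentially the same route as the paper: specialize Theorem \ref{main1} to $\alpha=2$, $\beta=1$, evaluate the collapsed kernels, and relax the resulting bound to the form \eqref{V-P in}. You are in fact slightly more careful than the paper about the final strict inequality, since the paper asserts the strict sign without explicitly excluding the degenerate case $M_1=M_2=0$.
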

\begin{proof}
Put $\alpha=2$ and $\beta=1$ in Theorem \ref{main1}. Then,
\begin{multline*}
 1\leq
\max\left\{\int_a^b\frac{s-a}{b-a}|g(s)|ds,\int_a^b\frac{b-s}{b-a}|g(s)|ds\right\}\\
+\max\left\{\int_a^b\frac{(s-a)^2}{b-a}|f(s)|ds,\int_a^b\frac{(b-s)(s-a)}{b-a}|f(s)|ds\right\}\\
<(b-a)M_1+M_2\max\left\{\frac{(b-a)^2}{3},\frac{(b-a)^2}{2}\right\}=M_1(b-a)+M_2\frac{(b-a)^2}{2},
\end{multline*}
which concludes the proof.
\end{proof}
Another consequence of Theorem \ref{main1} is the fractional Lyapunov inequality, that was firstly established by the author in \cite{Ferreira0}.

\begin{cor}
If the following fractional boundary value problem
\begin{align*}
    (D_a^\alpha x)&+f(t)x=0,\quad t\in(a,b),\ 1<\alpha\leq 2,\\
    x(a)=&0=x(b),
\end{align*}
where $q\in C[a,b]$ has a nontrivial solution, then
$$\int_a^b|f(s)|ds>\Gamma(\alpha)\left(\frac{4}{b-a}\right)^{\alpha-1}.$$
\end{cor}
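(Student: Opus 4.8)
The plan is to recover the stated inequality by specializing Theorem \ref{main1} to the case $g\equiv 0$ and then letting $\beta\to 0^+$. The key observation is that, since the middle term $g(t)(D_a^\beta x)$ vanishes when $g\equiv 0$, the parameter $\beta$ disappears entirely from the differential equation: the given BVP coincides with \eqref{eq00}--\eqref{eq11} with $g\equiv 0$ for \emph{every} admissible $\beta\in(0,\alpha-1]$. Hence a nontrivial solution $x$ (which lies in $E_\beta$ for each such $\beta$) simultaneously satisfies the hypotheses of Theorem \ref{main1} for all these $\beta$, and we are free to send $\beta$ to $0$.

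First I would apply Theorem \ref{main1} with $g\equiv 0$, so that the first outer maximum (the one weighted by $|g|$) drops out and we are left with
$$\Gamma(\alpha-\beta)\le\max\{A_1(\beta),A_2(\beta)\},$$
where $A_1,A_2$ denote the two $f$-integrals appearing in the statement of Theorem \ref{main1}. For $\beta$ small enough we have $\alpha-\beta-1>0$, so the inner maximum is taken over the two functions $p(s)=\frac{(s-a)^{\alpha-\beta-1}(b-s)^{\alpha-1}}{(b-a)^{\alpha-1}}$ and $r(s)=(b-s)^{\alpha-\beta-1}-\frac{(b-s)^{\alpha-1}}{(b-a)^\beta}$.

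Next I would pass to the limit $\beta\to 0^+$. Here $\Gamma(\alpha-\beta)\to\Gamma(\alpha)$ and the weight $(s-a)^\beta/\Gamma(\beta+1)\to 1$. Moreover $r(s)\to 0$ while $p(s)\to \frac{[(s-a)(b-s)]^{\alpha-1}}{(b-a)^{\alpha-1}}$, so both $\max\{p,r\}$ and $p$ converge pointwise on $(a,b)$ to this single kernel; since the integrands are bounded uniformly in $\beta$ on a small interval $(0,\beta_0]$ and $f\in C[a,b]$, dominated convergence gives $A_1(\beta),A_2(\beta)\to A:=\int_a^b\frac{[(s-a)(b-s)]^{\alpha-1}}{(b-a)^{\alpha-1}}|f(s)|ds$. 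Consequently $\Gamma(\alpha)\le A$.

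Finally I would estimate the kernel. Since $(s-a)(b-s)\le(b-a)^2/4$ with equality only at $s=(a+b)/2$, one gets $\frac{[(s-a)(b-s)]^{\alpha-1}}{(b-a)^{\alpha-1}}\le\left(\frac{b-a}{4}\right)^{\alpha-1}$, whence $A\le\left(\frac{b-a}{4}\right)^{\alpha-1}\int_a^b|f(s)|ds$. This last inequality is \emph{strict}: equality in the pointwise kernel bound holds only on the null set $\{(a+b)/2\}$, so strictness can fail only if $f\equiv 0$, but then $(D_a^\alpha x)=0$ together with $x(a)=x(b)=0$ forces $x\equiv 0$, contradicting nontriviality. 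Combining $\Gamma(\alpha)\le A$ with the strict kernel bound yields $\Gamma(\alpha)<\left(\frac{b-a}{4}\right)^{\alpha-1}\int_a^b|f(s)|ds$, which rearranges to the desired inequality. I expect the only delicate point to be the justification of the limit $\beta\to 0^+$ — in particular the pointwise collapse of $\max\{p,r\}$ onto the single kernel together with a uniform domination so that dominated convergence applies — whereas the extremal estimate of $(s-a)(b-s)$ and the strictness argument are routine.
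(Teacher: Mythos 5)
Your proposal is correct and follows essentially the same route as the paper: specialize Theorem \ref{main1} to $g\equiv 0$, reduce to $\Gamma(\alpha)\leq\int_a^b\frac{[(s-a)(b-s)]^{\alpha-1}}{(b-a)^{\alpha-1}}|f(s)|\,ds$, and then bound the kernel strictly by $\left(\frac{b-a}{4}\right)^{\alpha-1}$ using that $f\not\equiv 0$. The only differences are cosmetic and in your favour: the paper simply ``takes $\beta=0$'' in Theorem \ref{main1} (formally outside its stated range $0<\beta\leq 1$), whereas you justify this by a limit $\beta\to 0^{+}$ with dominated convergence, and you prove the midpoint kernel estimate directly instead of citing \cite[Lemma 2.2]{Ferreira0}.
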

\begin{proof}
In Theorem \ref{main1} we let $g=0$ on $[a,b]$. Then, we may take $\beta=0$ and we have that 
$$\Gamma(\alpha)\leq\int_a^b\frac{(s-a)^{\alpha-1}(b-s)^{\alpha-1}}{(b-a)^{\alpha-1}}|f(s)|ds$$
Now, note that $f$ cannot be zero on the entire interval $[a,b]$, otherwise, $x$ would be the trivial solution. Therefore, by using \cite[Lemma 2.2]{Ferreira0}, we get
$$\int_a^b\frac{(s-a)^{\alpha-1}(b-s)^{\alpha-1}}{(b-a)^{\alpha-1}}|f(s)|ds<\left(\frac{b-a}{4}\right)^{\alpha-1}\int_a^b|f(s)|ds,$$
from which the result follows.
\end{proof}

We end this work establishing a result analogous to Theorem \ref{thmzeros}.

\begin{thm}
Let $1<\alpha\leq 2$ and $0<\beta\leq 1$ be such that $\alpha-\beta-1\geq 0$. Then, the Mittag--Leffler function $$E_{\alpha-\beta,\alpha}(x)=\sum_{k=0}^\infty\frac{x^k}{\Gamma(k(\alpha-\beta)+\alpha)},$$
has no real zeros for $x\in(-\nu,0)$, where
$$\nu=\frac{\Gamma(\alpha-\beta)}{\max\left\{\int_0^1\Delta(s)ds,B(\alpha-\beta,\alpha)\right\}},$$
with $\Delta(s)=\max\left\{s^{\alpha-\beta-1}(1-s)^{\alpha-1}:\alpha-\beta-1>0,(1-s)^{\alpha-\beta-1}-(1-s)^{\alpha-1}\right\}$ and $B(x,y)$ being the Beta function. 
\end{thm}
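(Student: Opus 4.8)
The plan is to imitate the proof of Theorem \ref{thmzeros}, but with the general Theorem \ref{main1} playing the role that Theorem \ref{thm0} had there. First I would normalize the interval by taking $a=0$ and $b=1$, and consider the constant-coefficient eigenvalue problem
\[
(D_0^\alpha x)(t)+\lambda(D_0^\beta x)(t)=0,\quad t\in(0,1),\qquad x(0)=0=x(1),
\]
which is exactly \eqref{eq00}--\eqref{eq11} with $g\equiv\lambda\in\mathbb{R}$ and $f\equiv 0$.

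The key link between this BVP and the Mittag--Leffler function is that, up to a multiplicative constant, its nontrivial solution is
\[
x(t)=t^{\alpha-1}E_{\alpha-\beta,\alpha}(-\lambda t^{\alpha-\beta}).
\]
A term-by-term application of $D_0^\alpha$ and $D_0^\beta$ to this power series confirms that it solves the equation and satisfies $x(0)=0$, while $x(1)=E_{\alpha-\beta,\alpha}(-\lambda)$; imposing the boundary condition $x(1)=0$ is therefore equivalent to $E_{\alpha-\beta,\alpha}(-\lambda)=0$ (this is the content of \cite[Corollary 5.3]{Kilbas}). I would also verify that $x\in E_\beta$: the continuity requirement $x\in C[0,1]$ together with $x(0)=0$ rules out the second fundamental solution (which behaves like $t^{\alpha-2}$ near $0$), and the lowest exponent appearing in the series for $D_0^\beta x$ equals $(\alpha-\beta)-1$, which is nonnegative precisely because of the standing hypothesis $\alpha-\beta-1\geq 0$, so $D_0^\beta x\in C[0,1]$; the remaining membership conditions are immediate.

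Assume now, for contradiction, that $E_{\alpha-\beta,\alpha}$ has a real zero $x_0\in(-\nu,0)$. Since every term of $E_{\alpha-\beta,\alpha}(y)$ is positive for $y\geq 0$, any real zero is negative, so I may write $x_0=-\lambda$ with $\lambda>0$, and the BVP above then has a nontrivial solution in $E_\beta$. Applying Theorem \ref{main1} with $a=0$, $b=1$, $g\equiv\lambda$ and $f\equiv 0$, the two $f$-maxima vanish, the constant factor $|g(s)|=\lambda$ pulls out of each integral, and with $b-a=1$ the two $g$-integrands collapse exactly to $\Delta(s)$ and to $s^{\alpha-\beta-1}(1-s)^{\alpha-1}$, the latter integrating to $B(\alpha-\beta,\alpha)$. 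This produces
\[
\Gamma(\alpha-\beta)\leq\lambda\max\left\{\int_0^1\Delta(s)\,ds,\ B(\alpha-\beta,\alpha)\right\},
\]
that is, $\lambda\geq\nu$, contradicting $\lambda=-x_0<\nu$. Hence no zero lies in $(-\nu,0)$. Note that Theorem \ref{main1} already delivers a non-strict inequality for nontrivial solutions, so unlike in Theorem \ref{thmzeros} no analogue of Remark \ref{rem90} is needed here.

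I expect the only genuinely delicate point to be the justification that $x(t)=t^{\alpha-1}E_{\alpha-\beta,\alpha}(-\lambda t^{\alpha-\beta})$ is the correct solution \emph{and} that it belongs to $E_\beta$ — in particular verifying that $D_0^\beta x$ extends continuously to $t=0$, which is exactly where the hypothesis $\alpha-\beta-1\geq 0$ is indispensable. Once that is secured, the remainder is a direct substitution into Theorem \ref{main1} followed by the same sign bookkeeping used in the proof of Theorem \ref{thmzeros}.
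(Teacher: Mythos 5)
Your proposal is correct and follows essentially the same route as the paper: normalize to $[0,1]$, reduce to the constant-coefficient eigenvalue problem with $f\equiv 0$ and $g\equiv\lambda$, invoke \cite[Corollary 5.3]{Kilbas} to identify the eigenvalue condition $E_{\alpha-\beta,\alpha}(-\lambda)=0$, and apply Theorem \ref{main1} to obtain $\Gamma(\alpha-\beta)\leq\lambda\max\left\{\int_0^1\Delta(s)\,ds,\,B(\alpha-\beta,\alpha)\right\}$, i.e.\ $\lambda\geq\nu$. The extra care you take in verifying that the explicit solution lies in $E_\beta$ is a detail the paper leaves implicit, but it does not change the argument.
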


\begin{proof}
Consider $a=0$ and $b=1$. Let $f=0$ in \eqref{eq00} and suppose that $x$ is a nontrivial solution of the following BVP
\begin{align*}
    D_0^\alpha x(t)+\lambda(D_0^\beta x)(t)&=0,\quad t\in(0,1),\ \lambda\in\mathbb{R},\\
    x(0)=&0=x(1).
\end{align*}
By \cite[Corollary 5.3]{Kilbas} we know that $\lambda$ must satisfy $E_{\alpha-\beta,\alpha}(-\lambda)=0$. It is clear that, if such $\lambda$ exist, it must be positive. Using Theorem \ref{main1}, we obtain
\begin{multline*}
\Gamma(\alpha-\beta)\leq\\
\lambda\max\left\{\int_0^1\max\left\{s^{\alpha-\beta-1}(1-s)^{\alpha-1}:\alpha-\beta-1>0,(1-s)^{\alpha-\beta-1}-(1-s)^{\alpha-1}\right\}ds\right.\\
\left.,\int_0^1s^{\alpha-\beta-1}(1-s)^{\alpha-1}ds\right\}.
\end{multline*}
Noting that $\int_0^1s^{\alpha-\beta-1}(1-s)^{\alpha-1}ds=B(\alpha-\beta,\alpha)$, where $B(x,y)$ is the Beta function, we finally achieve the result we wanted to prove.
\end{proof}

\section*{Acknowledgments}

Rui Ferreira was supported by the ``Funda\c{c}\~ao para a Ci\^encia e a Tecnologia (FCT)" through the program ``Investigador FCT" with reference IF/01345/2014.




\begin{thebibliography}{1}

\bibitem{Agarwal}
R. P. Agarwal\ and\ A. Ozbekler, Lyapunov type inequalities for mixed nonlinear Riemann-Liouville fractional differential equations with a forcing term, J. Comput. Appl. Math. {\bf 314} (2017), 69--78. 

\bibitem{Cabrera1}
I. Cabrera, B. Lopez\ and\ K. Sadarangani, Lyapunov type inequalities for a fractional two-point boundary value problem, Math. Methods Appl. Sci. {\bf 40} (2017), no.~10, 3409--3414.

\bibitem{Cabrera2}
I. J. Cabrera, J. Rocha\ and\ K. B. Sadarangani, Lyapunov type inequalities for a fractional thermostat model, Rev. R. Acad. Cienc. Exactas F\'\i s. Nat. Ser. A Math. RACSAM {\bf 112} (2018), no.~1, 17--24.

\bibitem{Cohn} J. H. E. Cohn, On an oscillation criterion of de la Vall\'ee-Poussin, Quart. J. Math. Oxford Ser. (2) {\bf 39} (1988), no.~154, 173--174.

\bibitem{Ferreira0}
R. A. C. Ferreira, A Lyapunov-type inequality for a
fractional boundary value problem, Fract. Calc. Appl. Anal. {\bf
16} (2013), no.~4, 978–--984.

\bibitem{Ferreira1} R. A. C. Ferreira, On a Lyapunov-type inequality and the zeros of a certain Mittag--Leffler function, J. Math. Anal. Appl. {\bf 412} (2014), no.~2, 1058--1063.

\bibitem{Ferreira2}
R. A. C. Ferreira, Lyapunov-type inequality for an anti-periodic fractional boundary value problem, Fract. Calc. Appl. Anal. {\bf 20} (2017), no.~1, 284--291.

\bibitem{Ferreira3}
R. A. C. Ferreira, A de La Vall\'{e}e Poussin type inequality on time scales, Results Math. {\bf 73} (2018), no.~3, Art. 88, 9 pp.

\bibitem{Harris}
B. J. Harris, On an oscillation criterion of Cohn, Quart. J. Math. Oxford Ser. (2) {\bf 42} (1991), no.~167, 309--313.

\bibitem{Hartman} P. Hartman\ and\ A. Wintner, On an oscillation criterion of de la Vall\'ee Poussin, Quart. Appl. Math. {\bf 13} (1955), 330--332. 

\bibitem{Kilbas} A. A. Kilbas, H. M. Srivastava\ and\ J. J. Trujillo, {\it Theory and applications of fractional differential equations}, North-Holland Mathematics Studies, 204, Elsevier Science B.V., Amsterdam, 2006.

\bibitem{Jleli}
M. Jleli, M. Kirane\ and\ B. Samet, Lyapunov-type inequalities for a fractional $p$-Laplacian system, Fract. Calc. Appl. Anal. {\bf 20} (2017), no.~6, 1485--1506.

\bibitem{Mawhin}
J. Mawhin, The Legacy of De La Vallée Poussin’s work on boundary value problems of ordinary differential equations:  a survey and a bibliography, Académie Royale De Belgique, Ch.-J. deLa Vallée Poussin Collected Works, vol. II, 357–401.

\bibitem{Mitrinovic}
D. S. Mitrinovic, J. E. Pecaric\ and\ A. M. Fink, {\it Inequalities involving functions and their integrals and derivatives}, Mathematics and its Applications (East European Series), 53, Kluwer Academic Publishers Group, Dordrecht, 1991. 

\bibitem{Trujillo}
J. J. Trujillo, M. Rivero\ and\ B. Bonilla, On a Riemann-Liouville generalized Taylor's formula, J. Math. Anal. Appl. {\bf 231} (1999), no.~1, 255--265.

\end{thebibliography}
\end{document}